\newtheorem{thm}{Theorem}[section]
\newtheorem{prop}[thm]{Proposition}
\newtheorem{cor}[thm]{Corollary}
\newtheorem{lem}[thm]{Lemma}
\newcommand{\B}{{\mathcal B}}
\newcommand{\C}{{\mathcal C}}
\newcommand{\D}{{\mathcal D}}
\newcommand{\Z}{{\mathcal Z}}
\newcommand{\E}{{\mathcal E}}
\newcommand\Rep{\operatorname{Rep}}
\newcommand{\Irr}{\operatorname{Irr}}
\newcommand\FPdim{\operatorname{FPdim}}
\newcommand\vect{\operatorname{Vec}}
\newcommand\SuperV{\operatorname{SuperVec}}
\newcommand\id{\operatorname{id}}
\newcommand\Pic{\operatorname{G}}
\begin{document}

\title[Integral modular categories of dimension $pq^n$]{Integral modular categories of Frobenius-Perron dimension $pq^n$}
\author{Jingcheng Dong}
\address{College of Engineering, Nanjing Agricultural University, Nanjing 210031,
China}
\email{dongjc@njau.edu.cn}

\author{Henry Tucker}
\address{Department of Mathematics, University of Southern California, Los Angeles, CA 90089, USA}
\email{htucker@usc.edu}

\keywords{modular category; group-theoretical fusion category; equivariantization; Frobenius-Perron dimension}

\subjclass[2010]{18D10; 16T05}

\date{\today}


\begin{abstract}
Integral modular categories of Frobenius-Perron dimension $pq^n$, where $p$ and $q$ are primes, are considered. It is already known that such categories are group-theoretical in the cases of $0 \leq n \leq 4$. In the general case we determine that these categories are either group theoretical or contain a Tannakian subcategory of dimension $q^i$ for $i>1$. We then show that all integral modular categories $\mathcal{C}$ with $\mathrm{FPdim}(\mathcal{C})=pq^5$ are group-theoretical, and, if in addition $p<q$, all with $\mathrm{FPdim}(\mathcal{C})=pq^6$ or $pq^7$ are group-theoretical. In the process we generalize an existing criterion for an integral modular category to be group-theoretical.
\end{abstract}
 \maketitle



\section{Introduction}\label{intro}

There has been much work toward a classification of modular fusion categories in recent years. Such a classification would have far-reaching effects as modular categories appear in many areas of mathematics, including representation theory of Hopf algebras, topological phases of matter, quantum computing, and invariant theory for Murray-von Neumann subfactors.

A {\bf fusion category} is a $\mathbb{C}$-linear, semisimple, rigid tensor category with finitely many simple objects and whose unit object $\mathbf{1}$ is simple -- possibly best thought of as the most ``computationally accessible'' tensor categories. For general theory of fusion categories see \cite{eno}; in particular, they show that all integral fusion categories can be realized as the representation category of a quasi-Hopf algebra. Of particular interest are {\bf braided} fusion categories, those equipped with a natural isomorphism $c_{X, Y}: X \otimes Y \xrightarrow{\sim} Y \otimes X$ for each pair of objects $X, Y \in \C$ satisfying the hexagon relation. If $c_{X, Y}$ is the identity for every pair of objects then the category is {\bf symmetric}. On the other hand, a {\bf modular category} has the ``most non-trivial'' braiding possible. By an analogy given in \cite{mu-structmod}, the relationship of modular categories to symmetric categories is much like the relationship of centerless groups to abelian groups.

Classification results for modular categories have focused on determining what families are {\bf group-theoretical (GT)}, that is Morita duals of pointed fusion categories. This is similar in spirit to the goal of determining which families of Hopf algebras are group algebras or group algebra duals in the ongoing classification of finite-dimensional Hopf algebras. The idea is that if a category comes from finite groups then it must already be ``understood up to group theory.''

One approach to classification of modular categories is the consideration of all integral modular categories of a given Frobenius-Perron dimension. It has been shown that an integral modular category with any of the following Frobenius-Perron dimensions is GT (where $p$ and $q$ are primes):
\begin{itemize}
\item $p$ \cite{eno}
\item $p^n$ ($n \in \mathbb{N}$) \cite{dgno-gt}
\item $pq$ \cite{ego}
\item $pq^2$ \cite{jl}, \cite{nr}
\item $pq^3$ \cite{nr}
\item $pq^4$ \cite{pq4}
\end{itemize}
(In fact, the cases of $p$, $pq^2$, and $pq^3$ are all pointed.) These results suggest an obvious question:

\begin{center}
\framebox{Are integral modular categories with $\FPdim = pq^n$ GT?}
\end{center}

With this question in mind we describe some general results for modular categories in Section \ref{sec2} , and in Section \ref{sec3} we apply these methods to develop a criterion for an integral modular category to be GT:

\newtheorem*{snc-thm}{Theorem \ref{snc}}
\begin{snc-thm}
Let $\C$ be an integral modular category. If there exists a symmetric subcategory $\D \subseteq \C$ where $\D'$ is nilpotent, then $\C$ is GT.
\end{snc-thm}

The proof of Theorem \ref{snc} is motivated by \cite[Proposition 6.1]{dgno-gt}, and of course Theorem \ref{snc} can extend some results in \cite{dgno-gt}. In Section \ref{pqn}, we make progress on the general question in the case where $p < q$ by establishing conditions equivalent to GT:

\newtheorem*{gtcond-thm}{Theorem \ref{gt-conditions}}
\begin{gtcond-thm}
Let $\C$ be an integral modular category with $\FPdim(\C) = pq^n$ where $p<q$. The following are equivalent:
\begin{enumerate}
\item $\C$ is GT.
\item $\C$ is nilpotent.
\item There exists a symmetric subcategory $\D \subseteq \C$ with nilpotent M\"uger centralizer.
\item $p$ divides $\FPdim(\C_{pt})$ where $\C_{pt} \subseteq \C$  is the maximal pointed fusion subcategory of $\C$.
\end{enumerate}
\end{gtcond-thm}

\noindent And finally we apply this theorem to answer our main question for three new cases:

\newtheorem*{567gt-thm}{Theorem \ref{n567-gt}}
\begin{567gt-thm}
Let $\C$ be an integral modular category with $\FPdim(\C) = pq^n$. Then:
\begin{enumerate}
\item If $n = 5$ then $\C$ is GT.
\item If $p<q$ and $n=6$ or $7$ then $\C$ is GT.
\end{enumerate}
\end{567gt-thm}

\subsection*{Acknowledgements}

Both authors wish to thank Sonia Natale for her help with the proof of Theorem \ref{tannakian-subcat} and Susan Montgomery for encouraging this collaboration. This paper was written during the first author's stay at the University of Southern California. He has greatly appreciated the USC Department of Mathematics for their warm hospitality. His visit is financially supported by the China Scholarship Council and Nanjing Agricultural University. The research of the first author is supported by the Fundamental Research Funds for the Central Universities, the Natural Science Foundation of China (11201231)and the Qing Lan Project.


\section{Generalities for fusion categories.} \label{sec2}
In this section $\C$ is a fusion category and $\Irr(\C)$ is the set of isomorphism classes of simple objects in $\C$.


\subsection{Frobenius-Perron dimension.}\label{sec-gt}







Recall that the Grothendieck ring $K_0(\C)$ is the unital $\mathbb{Z}$-based (semi-)ring with basis $\Irr(\C)$ and with addition and multiplication given by $\oplus$ and $\otimes$, respectively. The {\bf Frobenius-Perron dimension} of an object $X \in \C$ is the maximum positive eigenvalue of left multiplication by $X$ in $K_0(\C)$. The Frobenius-Perron dimension of the category is given by
\begin{equation*}
\FPdim(\C) = \sum_{X \in \Irr(\C)} \FPdim(X)^2
\end{equation*}
For a fusion subcategory $\D \subseteq \C$ we have that $\FPdim(\D)$ divides $\FPdim(\C)$ (i.e., their quotient is an algebraic integer) by \cite[Proposition 8.15]{eno}. A fusion category is called {\bf integral} if $\FPdim(X) \in \mathbb{Z}$ for all $X \in \C$.

An object in a fusion category whose isomorphism class is invertible in $K_0(\C)$ is called {\bf invertible}; an object $X \in \C$ is invertible iff $\FPdim(X) = 1$. A fusion category $\C$ with all objects invertible is called {\bf pointed}. The group of invertible simple objects of $\C$ will be denoted  $\Pic(\C)$, and it generates the largest pointed subcategory $\C_{pt} \subseteq \C$.

\subsection{Group-theoretical fusion categories.}

A {\bf $\C$-module category} over a fusion category $\C$ is an abelian category $\mathcal{M}$ with an action $\C \times \mathcal{M} \to \mathcal{M}$ and module associativity and unit constraint natural isomorphisms satisfying some commutative diagrams for coherence. A $\C$-module functor is a functor between $\C$-module categories compatible with the module action via some natural isomorphisms again satisfying some commutative diagrams for coherence. The category $\C^*_\mathcal{M} := \mathcal{E}nd_\C(\mathcal{M})$ of $\C$-module endofunctors of $\mathcal{M}$ is the {\bf Morita dual} category of $\C$ with respect to $\mathcal{M}$. It is a fusion category by \cite{eno}, and two fusion categories $\C$ and $\D$ are said to be {\bf Morita equivalent} if there exists a $\C$-module category $\mathcal{M}$ such that $\D \simeq \C^*_\mathcal{M}$ as tensor categories. A fusion category is called {\bf group-theoretical (GT)} if it is Morita equivalent to a pointed fusion category.

Let $H \leq G$ be a subgroup of a finite group, $\omega \in Z^3(G, \mathbb{C}^{\times})$ a normalized 3-cocycle, and $\psi \in C^2(H, \mathbb{C}^{\times})$ a normalized 2-cochain such that $\mathrm{d}\psi = \omega |_H$. Consider the twisted group algebra $\mathbb{C}_\psi[H]$ as an object in $\vect_G^\omega$, the category of $G$-graded vector spaces with associativity given by 3-cocycle $\omega$; it is an associative algebra in the category because of the condition $\mathrm{d}\psi = \omega |_H$. Therefore we may consider the category
\begin{equation*}
\C(G, \omega, H, \psi) := \{\mathbb{C}_\psi[H]-\text{bimodules in} \vect_G^\omega\}
\end{equation*}
with tensor product $\otimes_{\mathbb{C}_\psi[H]}$ and unit object $k_\psi[H]$. It is GT, and in fact every GT fusion category can be obtained in this way \cite[\S 8.8]{eno},\cite{nat-fsgt}.

\subsection{Modular categories and their centralizers.}

Let $\C$ be a braided fusion category. The {\bf M\"uger centralizer} of a braided fusion subcategory $\D \subseteq \C$ is given by:
\begin{equation*}
\D' := \{ X \in \C \; | \; c_{X, Y} \circ c_{Y, X} = \id \; \text{for all} \; Y \in \D \}
\end{equation*}
and the {\bf M\"uger center} $\mathcal{Z}_2(\C)$ is the centralizer $\C'$ of the category itself. This subcategory determines the degeneracy of the braiding (where $\simeq$ is equivalence of braided fusion categories):
\begin{itemize}
\item $\C' \simeq \C \iff \C$ is {\bf symmetric}, i.e. $c_{X, Y} = \id \; \forall X, Y \in \C$.
\item $\C' \simeq \vect$ (i.e. trivial) $\iff \C$ is {\bf non-degenerate}.
\item $\C' \simeq \SuperV \iff \C$ is {\bf slightly degenerate}.
\end{itemize}
A non-degenerate braided fusion category with a ribbon structure is {\bf modular}. If $\C$ is modular then we have
\[
(\C_{pt})' = \C_{ad} := \langle \{ X \otimes X^* \}_{X \in \Irr(\C)}
\rangle \subseteq \C
\]
where $\C_{ad}$ is the {\bf adjoint category} of $\C$ \cite[Corollary 6.9]{gn}. A fusion category $\C$ is called {\bf nilpotent} if for $\C^{(0)} = \C$, $\C^{(1)}=\C_{\mathrm{ad}}$, and $\C^{(i+1)} = (\C^{(i)})_{\mathrm{ad}}$ there exists some $n \in \mathbb{N}$ such that $\C^{(n)} = \vect$.

Suppose that $\C$ is modular and that $\D \subseteq \C$ is a fusion subcategory. Then by \cite[Theorem 3.2]{mu-structmod} the M\"uger centralizer is involutive ($\D'' = \D$) and we have the identity:
\begin{equation}\label{centfactor}
\FPdim(\D) \FPdim(\D') = \FPdim(\C)
\end{equation}


\section{Symmetric subcategories}\label{sec3}

Let $\C$ be a braided fusion category. Then a fusion subcategory $\D \subseteq \C$ is symmetric if and only if $\D \subseteq \D'$. In this section we will show that the existence of a symmetric subcategory with nilpotent centralizer provides a criterion for a modular category to be group-theoretical.





\subsection{Commutator and wedge.}

For a fusion subcategory $\D \subseteq \C$ we define the {\bf commutator} subcategory as follows:
\begin{equation*}
\D^{co} := \langle \{ X \in \Irr(\C) \, | \, X \otimes X^* \in \D \} \rangle \subseteq \C
\end{equation*}
From the definition we see that
\begin{equation}\label{co-ad-squeeze}
(\D^{co})_{ad} \subseteq \D \subseteq (\D_{ad})^{co}
\end{equation}
Supposing further that $\C$ is modular we get the following identities from \cite[Proposition 6.6]{gn} for the M\"uger centralizer of the commutator and adjoint subcategories:
\begin{align}
(\D_{ad})' &= (\D')^{co}\label{centad}\\
(\D^{co})' &= (\D')_{ad}\label{centco}
\end{align}
Together with the identity for the double centralizer in a modular category these give us:
\begin{align}
((\D')_{ad})' &= \D^{co}\label{2centad}
\end{align}

For braided subcategories $\D, \E \subseteq \C$ we define the {\bf wedge} of $\D$ and $\E$ as follows:
\begin{equation*}
\D \vee \E := \langle \{ X \otimes Y \, | \, X \in \Irr(\D), \, Y \in \Irr(\E) \} \rangle \subseteq \C
\end{equation*}
This construction satisfies the identities:
\begin{align}
(\D \cap \E)' &= \D' \vee \E'\label{centwedge1}\\
(\D \vee \E)' &= \D' \cap \E'
\end{align}
and, letting $\B \subseteq \C$, the modular law for fusion categories:
\begin{align}\label{modlaw}
\B \cap (\D \vee \E) = (\B \cap \D) \vee \E
\end{align}

\subsection{Symmetric subcategories with nilpotent centralizers.}

We now modify \cite[Proposition 6.1]{dgno-bfc} to establish a dichotomy for symmetric subcategories with nilpotent centralizers:

\begin{prop}\label{dgno-modprop}
Let $\C$ be a modular category. Suppose that $\D \subseteq \C$ is a symmetric subcategory such that $\D'$ is nilpotent. Then either:
\begin{enumerate}
\item $(\D')_{ad} \subseteq \D$, or
\item $\D$ is a proper fusion subcategory of another symmetric subcategory.
\end{enumerate}
\end{prop}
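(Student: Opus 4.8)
The plan is to pull all of the formal content into a single reduction and then isolate the one genuinely categorical input. First I would dispose of the degenerate case: if $\D=\D'$ then $(\D')_{ad}=\D_{ad}\subseteq\D$, since the adjoint of any fusion category is a subcategory of it, so alternative~(1) holds; thus I may assume $\D\subsetneq\D'$. The key preliminary observation is that $\D$ is exactly the M\"uger center of $\D'$: using involutivity of the centralizer in the modular category $\C$ we get
\[
\mathcal{Z}_2(\D')=\D'\cap(\D')'=\D'\cap\D''=\D'\cap\D=\D,
\]
the last equality because $\D$ is symmetric, i.e. $\D\subseteq\D'$. Hence the hypothesis becomes that $\D'$ is a \emph{nilpotent} braided fusion category whose M\"uger center is $\D$, and the conclusion to aim at is a statement purely about $\D'$.

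Next I would reduce alternative~(2) to producing a single symmetric subcategory inside $\D'$. The claim is: if $\mathcal{T}\subseteq\D'$ is symmetric and $\mathcal{T}\not\subseteq\D$, then $\mathcal{S}:=\D\vee\mathcal{T}$ is a symmetric subcategory of $\C$ properly containing $\D$, which is~(2). To verify this I use the wedge--centralizer identity $(\D\vee\mathcal{T})'=\D'\cap\mathcal{T}'$ together with the symmetry of the centralizing relation ($\D\subseteq\mathcal{T}'\iff\mathcal{T}\subseteq\D'$). The four required inclusions are then immediate: $\D\subseteq\D'$ and $\mathcal{T}\subseteq\mathcal{T}'$ because $\D,\mathcal{T}$ are symmetric, while $\D\subseteq\mathcal{T}'$ and $\mathcal{T}\subseteq\D'$ both follow from $\mathcal{T}\subseteq\D'$. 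Therefore $\mathcal{S}\subseteq\D'\cap\mathcal{T}'=\mathcal{S}'$, so $\mathcal{S}$ is symmetric, and $\mathcal{T}\not\subseteq\D$ forces $\mathcal{S}\supsetneq\D$. Consequently the whole proposition is equivalent to the assertion: if $\D$ is the \emph{maximal} symmetric subcategory of $\D'$, then $(\D')_{ad}\subseteq\D$.

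The remaining step is to produce such a $\mathcal{T}$ whenever $(\D')_{ad}\not\subseteq\D$, and here I expect to need the structure theory of nilpotent braided categories rather than formal centralizer calculus. One orienting fact is that invertible objects centralize the adjoint: the monodromy of an invertible $g$ with $X\otimes X^*$ is the scalar $\chi_g(X)\chi_g(X)^{-1}=1$, so by \eqref{2centad} one already has $(\D')_{pt}\subseteq((\D')_{ad})'=\D^{co}$. To actually build $\mathcal{T}$ I would first treat the case where $\D$ is Tannakian, say $\D\simeq\Rep(G)$: since $\D$ is the full M\"uger center of the pre-modular category $\D'$, M\"uger's modularization applies and de-equivariantizing $\D'$ by $\D$ yields a nilpotent \emph{modular} category $\widetilde{\D'}$ of dimension $\FPdim(\D')/|G|^2$, under which $(\D')_{ad}$ maps onto $(\widetilde{\D'})_{ad}$. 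The hypothesis $(\D')_{ad}\not\subseteq\D$ then translates precisely into $\widetilde{\D'}$ being non-pointed, so it suffices to know that a non-pointed nilpotent modular category contains a nontrivial Tannakian subcategory; pulling this subcategory back through the equivariantization functor produces a symmetric subcategory of $\D'$ strictly larger than $\D$, which is the desired $\mathcal{T}$.

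The main obstacle is exactly this last structural input: that every non-pointed nilpotent modular category has a nontrivial Tannakian subcategory. I would obtain it from the decomposition of a nilpotent modular category as a Deligne product of modular categories of prime-power dimension (so at least one factor is non-pointed), together with the fact that a non-pointed integral modular category of prime-power dimension is group-theoretical and hence contains a nontrivial Tannakian subcategory. The one piece of extra care is the super-Tannakian case, in which $\D$ contains a fermion: there I would first split off, or quotient by, the order-two symmetric center and run the same modularization argument on the Tannakian remainder, so that the dichotomy is unaffected. In short, the centralizer and wedge identities reduce the proposition to a clean question about $\D'$, but closing that question genuinely uses nilpotency.
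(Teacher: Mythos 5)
Your preliminary reductions are sound and in fact reproduce the endgame of the paper's argument: the identity $\mathcal{Z}_2(\D')=\D'\cap\D''=\D$, and the observation that any symmetric $\mathcal{T}\subseteq\D'$ with $\mathcal{T}\not\subseteq\D$ makes $\D\vee\mathcal{T}$ a symmetric subcategory properly containing $\D$, are both correct and correctly justified by the wedge--centralizer identities. Where you diverge is in how $\mathcal{T}$ is produced, and this is where the proof breaks. The paper (following \cite[Prop.\ 6.1]{dgno-gt}) gets $\mathcal{T}$ by a purely formal use of nilpotency: choose $m$ maximal with $(\D')^{(m)}\not\subseteq\D$ (such $m$ exists because the descending adjoint series of $\D'$ terminates and $(\D')^{(1)}\not\subseteq\D$ by assumption); then $(\D')^{(m)}\subseteq((\D')^{(m+1)})^{co}\subseteq\D^{co}$ by \eqref{co-ad-squeeze} and maximality, so $\mathcal{T}:=\D^{co}\cap(\D')^{(m)}=(\D')^{(m)}$ lies in $\D^{co}\cap(\D')_{ad}$ and is symmetric since $(\D')_{ad}=(\D^{co})'$ by \eqref{centco}. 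No modularization, no classification results, and no integrality are needed.

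Your route instead funnels everything through the claim that a non-pointed nilpotent modular category contains a nontrivial Tannakian subcategory, and that claim is false: the Ising category is modular, nilpotent, non-pointed, of dimension $2^2$, and its unique nontrivial proper fusion subcategory is equivalent to $\SuperV$, so it has no nontrivial Tannakian subcategory. Your derivation of the claim smuggles in the word ``integral'' (to invoke that prime-power integral modular categories are GT), but nothing in the proposition forces $\C$, $\D'$, or the Deligne factors of the modularization to be integral --- the proposition is stated for an arbitrary modular $\C$, and Theorem \ref{snc} only adds integrality afterwards. Two further steps are also unjustified as written: the super-Tannakian case of $\D$ cannot be handled by ``splitting off the order-two center,'' since $\Rep(G,u)$ need not decompose as $\SuperV\boxtimes\Rep(G/\langle u\rangle)$ and M\"uger modularization requires a Tannakian center; and the asserted equivalence between $(\D')_{ad}\not\subseteq\D$ and non-pointedness of the modularization is stated without proof and is not obvious in either direction. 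The fix is simply to take $\mathcal{T}=\D^{co}\cap(\D')^{(m)}$ as above, which slots directly into your (correct) wedge argument.
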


\begin{proof}
This follows directly from the proof of \cite[Prop. 6.1]{dgno-gt}, which we restate (in our terms) for the convenience of the reader:

Suppose that (1) is not true, i.e. (in the notation from the definition of nilpotent) $(\D')^{(1)}:=(\D')_{ad} \nsubseteq \D$. Since $\D'=:\D^{(0)}$ is nilpotent there exists positive $n \in \mathbb{Z}$ such that $(\D')^{(n)} \simeq \vect \subseteq \D$. Therefore there must be some maximal positive $m \in \mathbb{Z}$ such that $(\D')^{(m)} \nsubseteq \D$.

Then $(\D')^{(m)} \subseteq ((\D')^{(m+1)})^{co} \subseteq \D^{co}$ by (\ref{co-ad-squeeze}) and maximality of m, respectively. Hence $\D^{co} \cap (\D')^{(m)} = (\D')^{(m)} \nsubseteq \D$, which yields:
\begin{equation*}
\D \subsetneq (\D^{co} \cap (\D')^{(m)}) \vee \D \subseteq (\D^{co} \cap (\D')_{ad}) \vee \D
\end{equation*}
Finally, they show that the right hand side is symmetric by showing it is contained in its own centralizer via identities \eqref{centco}, \eqref{2centad}, (\ref{centwedge1})--(\ref{modlaw}) for the adjoint, commutator, and wedge, which proves (2).
\end{proof}

We now prove our main result for this section:

\begin{thm}\label{snc}
Let $\C$ be an integral modular category.\\ If there exists a symmetric subcategory $\D \subseteq \C$ whose M\"{u}ger centralizer $\D'$ is nilpotent, then $\C$ is GT.
\end{thm}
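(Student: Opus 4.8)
The plan is to first replace $\D$ by a maximal symmetric subcategory with nilpotent centralizer, then feed it into Proposition \ref{dgno-modprop}, and finally convert the resulting structural information into group-theoreticity by de-equivariantizing and invoking the known fact that integral nilpotent modular categories are GT.

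First I would reduce to the case $(\D')_{ad}\subseteq\D$. Among all symmetric subcategories of $\C$ whose M\"uger centralizer is nilpotent---a nonempty, finite poset by hypothesis---choose a maximal one, which I again call $\D$. This enlargement is harmless: if $\D\subseteq\widetilde\D$ then $\widetilde\D{}'\subseteq\D'$, and a fusion subcategory of a nilpotent category is again nilpotent (its adjoint series is dominated by that of $\D'$), so passing to a larger symmetric subcategory keeps the centralizer nilpotent. Now apply Proposition \ref{dgno-modprop}. Alternative (2) would place $\D$ properly inside a larger symmetric subcategory, which by the previous remark again has nilpotent centralizer, contradicting maximality; hence alternative (1) holds, i.e.\ $(\D')_{ad}\subseteq\D$.

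Next I would read off the structure of $\D'$. Since $\C$ is modular we have $\D''=\D$, and as $\D\subseteq\D'$ the M\"uger center of the braided fusion category $\D'$ is $Z_2(\D')=\D'\cap(\D')'=\D'\cap\D=\D$. Thus $\D'$ is an integral nilpotent braided fusion category whose symmetric center is exactly $\D$. Assuming for the moment that $\D$ is Tannakian, say $\D\simeq\Rep(G)$, I would pass to the modularization (de-equivariantization) $\mathcal{M}:=(\D')_G$. This is a genuine modular category of dimension $\FPdim(\C)/|G|^2$, it remains integral, and it remains nilpotent because de-equivariantization by a Tannakian subcategory of the center preserves nilpotency. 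By the structure theory for nilpotent modular categories in \cite{dgno-gt}---a Deligne product of modular categories of prime-power dimension, each GT by the $p^n$ case---the category $\mathcal{M}$ is group-theoretical.

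It then remains to transfer group-theoreticity from $\mathcal{M}$ back up to $\C$, and this is where I expect the real work to lie. One has $\C\simeq(\C_G)^G$, where $\C_G$ is a $G$-crossed braided extension of its neutral component $\mathcal{M}=(\D')_G$; the point is to show that this extension is itself group-theoretical, after which $\C$ is GT because equivariantizations of group-theoretical categories are group-theoretical. This is precisely where condition (1) is used: $(\D')_{ad}\subseteq\D$ pins down the universal grading of $\D'$ and thereby the crossing data, forcing the extension to stay group-theoretical. The remaining obstacle is the exceptional case $\D\simeq\SuperV$---a bare fermion admitting no nontrivial Tannakian subcategory to de-equivariantize---which must be treated separately through the slightly degenerate category $\D'$; the rest is formal manipulation of the centralizer, commutator, and wedge identities already recorded in this section.
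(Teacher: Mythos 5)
Your first paragraph is essentially the paper's entire argument: pass to a maximal symmetric subcategory (whose centralizer is still nilpotent, because centralizers reverse inclusions and nilpotency passes to fusion subcategories by \cite[Proposition 4.6]{gn}), and conclude from Proposition \ref{dgno-modprop} that $(\D')_{ad}\subseteq\D$. The gap lies in everything after that. The paper finishes in one line by citing \cite[Corollary 4.14]{dgno-bfc}: a modular category is GT if and only if it is integral and admits a symmetric subcategory $\E$ with $(\E')_{ad}\subseteq\E$. That corollary is precisely the ``transfer'' step you defer to the end of your sketch, and it is not formal. Knowing that the neutral component $(\D')_G$ of the braided $G$-crossed category $\C_G$ is group-theoretical does not imply that the extension $\C_G$ is group-theoretical: $G$-extensions of GT categories are in general only \emph{weakly} group-theoretical, and the failure of GT-ness to survive extensions is exactly why that larger class was introduced. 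Your remark that ``$(\D')_{ad}\subseteq\D$ pins down the crossing data, forcing the extension to stay group-theoretical'' is the entire content of the theorem and is asserted rather than proved. The super-Tannakian case $\SuperV\subseteq\D$ is likewise flagged but not handled.

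So the reduction to the maximal case is correct and coincides with the paper's, but the proof is incomplete without either proving or citing the DGNO criterion \cite[Corollary 4.14]{dgno-bfc}, which does all of the remaining work. Two smaller points: preservation of nilpotency under de-equivariantization is not automatic and needs the argument via the dominant tensor functor $\Phi\colon \D'\to(\D')_G$ together with $((\D')_G)_{ad}\subseteq\Phi((\D')_{ad})$, as the paper does in the proof of Theorem \ref{tannakian-subcat}; and the nilpotent structure theorem does give that $(\D')_G$ is GT, but, as noted, that is not the category whose group-theoreticity you ultimately need.
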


\begin{proof}
Suppose that $\D \subset \C$ is symmetric and $\D'$ is nilpotent. In \cite[Corollary 4.14]{dgno-bfc} it was shown that a modular category is GT iff it is both integral and has a symmetric subcategory $\E \subseteq \C$ such that $(\E')_{ad} \subseteq \E$. Since we assume that $\C$ is integral we must only exhibit such a subcategory $\E$. If $\D$ is maximal then it must satisfy part (1) of Proposition \ref{dgno-modprop} since (2) violates its maximality. This is exactly the condition that $(\D')_{ad} \subseteq \D$, so we are done since this implies $\C$ is GT.

On the other hand, suppose that $\D$ is not maximal. Then $\D$ is contained in some maximal symmetric subcategory $\E \subseteq \C$, which implies that $\E' \subseteq \D'$. Since $\D'$ is nilpotent, by \cite[Proposition 4.6]{gn} we have that $\E'$ is nilpotent. Therefore $\E$ must satisfy part (1) of Proposition \ref{dgno-modprop} since it is maximal, and therefore $\C$ is GT.
\end{proof}

Note that this yields an alternate proof of the Corollary 6.2 of \cite{dgno-bfc}:

\begin{cor} \cite[Corollary 6.2]{dgno-bfc}\label{nilp->gt}
Let $\C$ be an integral modular category. If $\C$ is nilpotent then it is GT.
\end{cor}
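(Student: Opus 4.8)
The plan is to derive the corollary immediately from Theorem \ref{snc} by exhibiting a symmetric subcategory whose M\"uger centralizer is nilpotent. Given that $\C$ is itself nilpotent, this costs almost nothing: the natural candidate is the trivial subcategory $\vect \subseteq \C$, whose centralizer is all of $\C$. So the entire content has already been absorbed into Theorem \ref{snc}, and what remains is a formal verification.

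First I would check that $\D := \vect$ is symmetric. By the criterion recalled at the start of Section \ref{sec3}, a subcategory is symmetric if and only if $\D \subseteq \D'$. The centralizer $\D'$ consists of all $X \in \C$ with $c_{X, Y} \circ c_{Y, X} = \id$ for every $Y \in \D$; since $\D = \vect$ contains only (multiples of) the unit object $\mathbf{1}$, and the double braiding with $\mathbf{1}$ is the identity by the unit axioms, this condition is automatic. Hence $\D' = \C$, and in particular $\D = \vect \subseteq \C = \D'$, so $\D$ is symmetric.

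Next, since $\D' = \C$ and $\C$ is nilpotent by hypothesis, the centralizer $\D'$ is nilpotent. Thus $\D$ is a symmetric subcategory whose M\"uger centralizer is nilpotent, and $\C$ is integral by assumption, so Theorem \ref{snc} applies directly and yields that $\C$ is GT.

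There is essentially no obstacle here; the nilpotence hypothesis on $\C$ is precisely the nilpotence of the centralizer of the (trivially symmetric) unit subcategory, so the general criterion of Theorem \ref{snc} specializes to the nilpotent case with no extra work. The only point to confirm is the identification $\vect' = \C$, which is where I expect the reader might pause, but it follows at once from the triviality of the braiding with the unit object.
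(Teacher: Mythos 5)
Your proof is correct and is exactly the paper's argument: take $\D = \vect$, observe it is symmetric with $\D' = \C$ nilpotent, and apply Theorem \ref{snc}. The extra verification that $\vect' = \C$ is a harmless elaboration of what the paper states without comment.
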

\begin{proof}
Let $\D = \vect \subseteq \C$, which is symmetric and $\D' = \C$. Thus $\D'$ is nilpotent, hence $\C$ is GT by Theorem \ref{snc}.
\end{proof}

\noindent And, more notably, we also get the following generalization of \cite[Theorem 1.5]{dgno-bfc}:

\begin{cor}
Let $\C$ be an integral \emph{fusion} category, i.e. not necessarily modular. If there exists a symmetric subcategory $\D \subseteq \Z(\C)$ such that $\D'$ is nilpotent, then $\C$ is GT.
\end{cor}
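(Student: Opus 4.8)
The plan is to reduce to the modular case already handled in Theorem \ref{snc} by passing to the Drinfeld center $\Z(\C)$, and then to transport group-theoreticity back down from $\Z(\C)$ to $\C$. First I would record that the center of any fusion category is a non-degenerate braided fusion category carrying a canonical modular structure, and that since $\C$ is integral its center $\Z(\C)$ is again integral, with $\FPdim(\Z(\C)) = \FPdim(\C)^2$ (see \cite{eno}). Thus $\Z(\C)$ is an integral modular category. By hypothesis it contains a symmetric subcategory $\D$ whose M\"uger centralizer $\D'$ -- computed inside the braided category $\Z(\C)$, which is the only place the centralizer makes sense since $\C$ itself need not be braided -- is nilpotent.

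Next I would apply Theorem \ref{snc} verbatim to the integral modular category $\Z(\C)$ equipped with the symmetric subcategory $\D$, which immediately yields that $\Z(\C)$ is GT. For the descent it is not enough to know this abstractly, so I would instead run the argument of Theorem \ref{snc} (via Proposition \ref{dgno-modprop}) and keep track of the maximal symmetric subcategory $\E \supseteq \D$ it produces, which satisfies $\E \subseteq \E'$ and $(\E')_{ad} \subseteq \E$ inside $\Z(\C)$.

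The concluding step, and the main obstacle, is to descend from $\Z(\C)$ to $\C$. The right tool is the characterization that a fusion category $\C$ is GT if and only if its center $\Z(\C)$ contains a \emph{Lagrangian} subcategory, i.e. a Tannakian subcategory $\mathcal L$ with $\mathcal L' = \mathcal L$ \cite{dgno-bfc}. The subtlety is that GT-ness of the modular category $\Z(\C)$ does not on its own force the existence of such a Lagrangian subcategory -- a GT modular category need not be a Drinfeld double (for instance the pointed semion category is GT but contains no Lagrangian subcategory). Hence the crux is to promote the maximal symmetric $\E$ obtained above to an honest Lagrangian subcategory of the \emph{center}: using integrality one checks that $\E$ is Tannakian (not merely super-Tannakian), and using the centralizer--commutator--wedge identities \eqref{centad}--\eqref{modlaw} together with $(\E')_{ad}\subseteq\E$ and maximality one verifies that in fact $\E' = \E$. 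Once $\E$ is exhibited as a Lagrangian subcategory of $\Z(\C)$, the cited characterization gives that $\C$ is GT, completing the proof.
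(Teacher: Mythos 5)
Your opening move --- observing that $\Z(\C)$ is an integral modular category of dimension $\FPdim(\C)^2$ and applying Theorem \ref{snc} to it --- is exactly how the paper begins. The gap is in your descent step. You rightly note that GT-ness of an abstract modular category does not produce a Lagrangian subcategory, but the repair you propose rests on two claims that are false. First, integrality does not make a maximal symmetric subcategory $\E$ with $(\E')_{ad}\subseteq\E$ Tannakian, even inside an honest Drinfel'd center: in $\Z(\vect_{\mathbb{Z}_2})$ (the toric code) the subcategory generated by the fermion is maximal symmetric, satisfies $\E'=\E$ and hence $(\E')_{ad}=\vect\subseteq\E$, and is equivalent to $\SuperV$, not to any $\Rep(G)$. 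Second, maximality together with $(\E')_{ad}\subseteq\E$ does not force $\E'=\E$ as a consequence of the commutator/wedge identities: in the pointed modular category $\Z(\vect_{\mathbb{Z}_2})\boxtimes\mathcal{B}$, with $\mathcal{B}$ a non-degenerate pointed category of dimension $3$, the subcategory generated by a single boson is maximal symmetric with $(\E')_{ad}=\vect\subseteq\E$, yet $\FPdim(\E)=2$ while $\FPdim(\E')=6$ by \eqref{centfactor}. So no general argument of the kind you sketch can exist, and promoting $\E$ to a Lagrangian subcategory of $\Z(\C)$ is the whole difficulty, not a routine verification; indeed the only way one knows $\Z(\C)$ has a Lagrangian subcategory is by first knowing $\C$ is GT, which is circular here.

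The paper avoids Lagrangian subcategories entirely and descends by Morita theory: $\C$ is a module category over $\C\boxtimes\C^{op}$ with dual category $\Z(\C)$ \cite[Section 2.3]{eno}, so $\C\boxtimes\C^{op}$ is Morita equivalent to the GT category $\Z(\C)$ and is therefore GT (GT-ness being a Morita invariant by definition); then $\C$, as a fusion subcategory of $\C\boxtimes\C^{op}$, is GT by \cite[Proposition 8.44]{eno}. Replacing your final step with this two-line argument closes the gap.
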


\begin{proof}
By Theorem \ref{snc}, the Drinfel'd center $\Z(\C)$ is GT. We may consider $\C$ as a $\C \boxtimes \C^{op}$-module category (where $\boxtimes$ is the Deligne tensor product of categories) via the action $(X, Y) \cdot \Z := X \otimes Z \otimes Y$. By \cite[Section 2.3]{eno} we know that $\Z(\C)$ is Morita equivalent to $\C \boxtimes \C^{op}$ via this module category. Hence $\C \boxtimes \C^{op}$ is GT since the class of GT fusion categories is closed under Morita equivalence by definition.

Therefore $\C$ is a fusion subcategory of GT fusion category $\C \boxtimes \C^{op}$, hence it is GT by \cite[Proposition 8.44]{eno}.
\end{proof}


\section{Integral modular categories with $\FPdim(\C) = pq^n$}\label{pqn}

Let $\C$ be an integral, modular category with $\FPdim(\C) = pq^n$ where $p$ and $q$ are distinct primes and $n$ is a positive integer. As a direct consequence of Section 3, we have the following lemma:

\begin{lem}\label{symsubpqi}
Let $\C$ be as above. If there exists symmetric subcategory $\D \subseteq \C$ such that $\FPdim(\D) = pq^i$ for some $i>0$ then $\C$ is GT.
\end{lem}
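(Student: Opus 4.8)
The plan is to invoke Theorem~\ref{snc}, for which---since $\C$ is integral by hypothesis---it suffices to exhibit a symmetric subcategory whose M\"uger centralizer is nilpotent, and I claim the given $\D$ already works. The decisive point is a dimension count: because $\C$ is modular and $\D \subseteq \C$ is a fusion subcategory, the factorization identity~\eqref{centfactor} gives
\[
\FPdim(\D') \;=\; \frac{\FPdim(\C)}{\FPdim(\D)} \;=\; \frac{pq^n}{pq^i} \;=\; q^{\,n-i},
\]
where $i \le n$ since $\FPdim(\D) = pq^i$ must divide $\FPdim(\C) = pq^n$. Thus $\D'$ has Frobenius--Perron dimension a power of the single prime $q$.

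Next I would observe that $\D'$, being a fusion subcategory of the integral category $\C$, is itself integral, and by the previous step its dimension $q^{\,n-i}$ is a prime power. Hence by the standard result that every integral fusion category of prime-power Frobenius--Perron dimension is nilpotent (see \cite{gn}), the centralizer $\D'$ is nilpotent. When $i = n$ this is immediate, as then $\FPdim(\D') = 1$ forces $\D' \simeq \vect$.

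Finally, $\D$ is then a symmetric subcategory of the integral modular category $\C$ whose M\"uger centralizer $\D'$ is nilpotent, so Theorem~\ref{snc} applies verbatim and yields that $\C$ is GT. There is no deep obstacle here: the argument is simply the composition of the dimension identity~\eqref{centfactor}, the prime-power nilpotency theorem, and Theorem~\ref{snc}. The only care needed is in verifying the hypotheses of the nilpotency theorem---that $\D'$ is integral (inherited from $\C$) and of genuine prime-power dimension---and in recalling that Theorem~\ref{snc} requires nilpotency of the \emph{centralizer} $\D'$ rather than of $\D$ itself.
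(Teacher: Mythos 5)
Your proposal is correct and follows essentially the same route as the paper: apply the dimension identity \eqref{centfactor} to conclude $\FPdim(\D') = q^{n-i}$, invoke the prime-power nilpotency theorem (the paper cites \cite[Theorem 8.28]{eno}) to get that $\D'$ is nilpotent, and then apply Theorem~\ref{snc}. The extra remarks about integrality of $\D'$ and the case $i=n$ are harmless but not needed, since the nilpotency result requires only prime-power Frobenius--Perron dimension.
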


\begin{proof}
Apply \eqref{centfactor} to the categories $\D \subseteq \C$:
\begin{align*}
\quad \FPdim(\D) \FPdim(\D') &= \FPdim(\C)\\
\quad pq^i \FPdim(\D') &= pq^n\\
\FPdim(\D') &= q^{n-i}
\end{align*}
Since $\FPdim(\D')$ is a prime power we have that $\D'$ is nilpotent by \cite[Theorem 8.28]{eno}. Therefore $\C$ is GT by Theorem \ref{snc}.
\end{proof}


\subsection{Extensions.}

Let $G$ be a finite group with identity element $e$. A fusion category $\C$ is {\bf $G$-graded} if it can be written as a direct sum of full abelian subcategories
\begin{equation*}
\C = \bigoplus_{g \in G} \C_g
\end{equation*}
such that the tensor product $\otimes: \C \times \C \to \C$ maps $\C_g \times \C_h$ into $\C_{gh}$ and the dual functor sends $\C_g$ into $\C_{g^{-1}}$. If $\C_g \neq 0$ for all $g \in G$ then the grading is called {\bf faithful}. If $\C$ is faithfully $G$-graded and $\C_e = \D$ then $\C$ is called a {\bf $G$-extension} of $\D$. By \cite[Proposition 8.20]{eno} the category summands of a $G$-extension must have equal $\FPdim$, hence $\FPdim(\C) = |G| \FPdim(\D)$.

Every fusion category $\C$ has a unique faithful grading called the {\bf universal grading} such that the trivial component is $\C_{\mathrm{ad}}$. The corresponding group is called the {\bf universal grading group} and is denoted $\mathcal{U}(\C)$. This group is universal in that any faithful $G$-grading must come from a surjective group homomorphism $\mathcal{U}(\C) \to G$. If $\C$ is modular then
$\mathcal{U}(\C) \cong \Pic(\C)$, and in particular $\FPdim(\C_{pt}) = |\mathcal{U}(\C)|$ \cite[Theorem 6.3]{gn}.

We have the following lemma regarding nilpotent extensions of fusion categories:

\begin{lem}\label{nilp-ext}
Let $\C$ be a nilpotent fusion category with faithful grading\\ $\C = \oplus_{g \in G} \C_g$. If $\FPdim(\C_e)$ is square-free then $\C$ is pointed.
\end{lem}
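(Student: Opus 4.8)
The plan is to argue by induction on the length of the upper central series of $\C$, leveraging the fact that a nilpotent fusion category is built up from its adjoint subcategory $\C_{ad}$ by successive extensions. The base case is when $\C = \C_{ad}$; here nilpotency forces $\C_{ad} = \vect$, so $\C$ is pointed and we are done. For the inductive step, the key observation is that the universal grading $\C = \bigoplus_{g \in \mathcal{U}(\C)} \C_g$ has trivial component $\C_e = \C_{ad}$, and since $\C$ is nilpotent, $\C_{ad}$ is a nilpotent fusion category of strictly smaller ``nilpotency class'' (the upper central series of $\C_{ad}$ is a truncation of that of $\C$).

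First I would reduce to the universal grading. The given faithful grading $\C = \oplus_{g \in G}\C_g$ need not be the universal one, but by the universal property any faithful $G$-grading factors through a surjection $\mathcal{U}(\C) \to G$, and the trivial component $\C_e$ of the given grading contains $\C_{ad} = (\C)_e^{\mathrm{univ}}$. By \cite[Proposition 8.20]{eno} all components of a faithful grading have equal Frobenius-Perron dimension, so $\FPdim(\C_e) = \FPdim(\C)/|G|$; since the universal grading refines the given one, $\FPdim(\C_{ad})$ divides $\FPdim(\C_e)$, and hence $\FPdim(\C_{ad})$ is also square-free. This lets me replace the arbitrary grading by the universal grading without loss of generality.

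Next I would apply the inductive hypothesis to $\C_{ad}$. Its universal grading group has trivial component $(\C_{ad})_{ad} = \C^{(2)}$, whose Frobenius-Perron dimension divides $\FPdim(\C_{ad})$ and is therefore square-free as well. By induction $\C_{ad}$ is pointed. Now I need to climb back up: if $\C_{ad}$ is pointed, I want to conclude that $\C$ itself is pointed. This is where the square-free hypothesis does its real work. Since $\C_{ad}$ is pointed, every simple object of $\C$ lies in some graded component $\C_g$, and the restriction of the tensor product makes each $\C_g$ an invertible $\C_{ad}$-bimodule category. The Frobenius-Perron dimensions of the simple objects in a single component are constrained: the formula $\FPdim(\C)=\sum_X \FPdim(X)^2$ restricted to $\C_{ad}=\C_e$ gives $\FPdim(\C_{ad})=\sum_{X\in\Irr(\C_{ad})}\FPdim(X)^2$, and since this is square-free and every $\FPdim(X)^2$ is a square, a short arithmetic argument forces every simple in $\C_{ad}$ to be invertible (consistent with $\C_{ad}$ pointed) and, crucially, forces every simple in each $\C_g$ to be invertible too.

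The main obstacle I anticipate is precisely this last passage—showing that simple objects in the nontrivial components $\C_g$ are invertible once $\C_{ad}$ is pointed. The difficulty is that square-freeness of $\FPdim(\C_e)$ a priori only controls the trivial component. The way I expect to close this gap is to use that each component $\C_g$ is an invertible module category over the pointed category $\C_{ad}$, so the group $\Pic(\C_{ad})$ acts freely (by tensoring) on $\Irr(\C_g)$, whence all simple objects in $\C_g$ share a common Frobenius-Perron dimension $d_g$ with $\FPdim(\C_g)=|\Irr(\C_g)|\, d_g^2=\FPdim(\C_{ad})$. Combined with square-freeness this forces $d_g=1$. Thus every simple object of $\C$ is invertible and $\C$ is pointed, completing the induction.
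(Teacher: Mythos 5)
There is a genuine gap at the final, load-bearing step. You assert that because $\C_g$ is an invertible bimodule category over the pointed category $\C_{ad}$, the group $\Pic(\C_{ad})$ acts \emph{freely} on $\Irr(\C_g)$, so that $|\Irr(\C_g)| = |\Pic(\C_{ad})| = \FPdim(\C_{ad})$ and hence $d_g = 1$. This is false: in the Ising category, $\C_{ad} = \langle \mathbf{1}, \epsilon\rangle$ is pointed of dimension $2$, the nontrivial component is $\{\sigma\}$, and $\epsilon \otimes \sigma \cong \sigma$, so the action has a nontrivial stabilizer. What is actually true is that the action is \emph{transitive} (for $X, Y \in \Irr(\C_g)$ the object $X \otimes Y^*$ lies in the pointed category $\C_e$, so some invertible $\gamma$ satisfies $X \cong \gamma \otimes Y$), and that the stabilizer of $X$ has order $\FPdim(X \otimes X^*) = \FPdim(X)^2$. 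Orbit--stabilizer then gives $|\Irr(\C_g)|\, d_g^2 = |\Pic(\C_{ad})| = \FPdim(\C_{ad})$ automatically, so your displayed equation carries no information beyond $d_g^2 \mid \FPdim(\C_{ad})$; square-freeness then forces $d_g = 1$ only if you already know $d_g \in \mathbb{Z}$. For Ising, $d_g^2 = 2$ divides the square-free number $2$, and the category is not pointed --- so the freeness claim is exactly where your argument breaks, and it cannot be repaired without invoking integrality.

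For comparison, the paper's proof is two lines: $\C_{ad} \subseteq \C_e$ forces $\FPdim(\C_{ad})$ to be square-free, and \cite[Corollary 5.3]{gn} gives $\FPdim(X)^2 \mid \FPdim(\C_{ad})$ for every simple $X$ of the nilpotent category $\C$, whence $\FPdim(X) = 1$. (This too tacitly uses that the dimensions are integers --- harmless where the lemma is applied, namely to de-equivariantizations of integral categories, but worth noting since Ising shows the unqualified statement fails.) Your induction on nilpotency class to show $\C_{ad}$ is pointed is correct but unnecessary: once you replace ``free'' by ``transitive'' and compute the stabilizer, your endgame is just a hands-on re-derivation of the cited divisibility $\FPdim(X)^2 \mid \FPdim(\C_{ad})$ in the special case where $\C_{ad}$ is pointed, and you still need the same integrality input to finish.
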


\begin{proof}
Since any faithful grading comes from a surjective group homomorphism $\mathcal{U}(\C) \to G$ we have that $\C_{ad} \subseteq \C_e$, hence $\FPdim(\C_{ad})$ must also be square-free. Let $X \in \Irr(\C)$. By \cite[Corollary 5.3]{gn} we have that $\FPdim(X)^2 \, | \, \FPdim(\C_{ad})$, hence $\FPdim(X) = 1$, hence $\C$ is pointed.
\end{proof}

For the family of present interest we are also able to ascertain the form of the Frobenius-Perron dimensions of the simple objects as well as a prime divisor of the universal grading group.

\begin{lem}\label{dim-lem}
Let $\C$ be an integral, modular category with $\FPdim(\C) = pq^n$, and let $n = 2m$ (resp. $n = 2m + 1$) if $n$ is even (resp. odd). Then we have the following:
\begin{enumerate}
\item For $X \in \Irr(\C)$ we have $\FPdim(X) = q^i$ for some $i \in \{0, 1, \dotsc, m \}$
\item $q^2$ divides $|\mathcal{U}(\C)|$, and in particular $\mathcal{U}(\C)$ is nontrivial
\end{enumerate}
\end{lem}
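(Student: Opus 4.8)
The plan is to pin down the possible Frobenius-Perron dimensions of the simple objects using the divisibility $\FPdim(X)^2 \mid \FPdim(\C_{ad})$, and then to feed the resulting constraint into the dimension-sum formula $\FPdim(\C) = \sum_{X \in \Irr(\C)} \FPdim(X)^2$ in order to extract the claimed arithmetic information about the invertible objects.

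For part (1) I would start from \cite[Corollary 5.3]{gn}, which gives $\FPdim(X)^2 \mid \FPdim(\C_{ad})$ for every $X \in \Irr(\C)$ (the same input already used in the proof of Lemma \ref{nilp-ext}). Since $\C_{ad}$ is a fusion subcategory, $\FPdim(\C_{ad})$ divides $\FPdim(\C) = pq^n$ by \cite[Proposition 8.15]{eno}, so $\FPdim(\C_{ad}) = p^e q^f$ with $e \in \{0, 1\}$ and $0 \le f \le n$. Because $\FPdim(X)^2 \mid pq^n$, the only prime divisors of $\FPdim(X)$ lie in $\{p, q\}$, so I may write $\FPdim(X) = p^a q^b$; then $p^{2a} q^{2b} \mid p^e q^f$ forces $2a \le e \le 1$ and $2b \le f \le n$. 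The first inequality yields $a = 0$, so $p$ never divides $\FPdim(X)$, while the second yields $b \le n/2$, hence $b \le m$ in both the even and the odd case. Thus $\FPdim(X) = q^i$ with $i \in \{0, \dots, m\}$.

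For part (2) I would sort the simple objects by dimension: letting $a_i$ denote the number of $X \in \Irr(\C)$ with $\FPdim(X) = q^i$, part (1) rewrites the dimension formula as
\begin{equation*}
pq^n = \sum_{i=0}^{m} a_i q^{2i}.
\end{equation*}
Here $a_0$ counts exactly the invertible simple objects, so $a_0 = |\Pic(\C)| = |\mathcal{U}(\C)|$ by the modularity identity $\FPdim(\C_{pt}) = |\mathcal{U}(\C)|$ of \cite[Theorem 6.3]{gn}. Reducing the displayed equation modulo $q^2$ — where the left-hand side is divisible by $q^2$ as soon as $n \ge 2$, and every term with $i \ge 1$ vanishes — leaves $a_0 \equiv 0 \pmod{q^2}$, that is $q^2 \mid |\mathcal{U}(\C)|$; in particular $|\mathcal{U}(\C)| \ge q^2 > 1$, so $\mathcal{U}(\C)$ is nontrivial.

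The only genuinely delicate point is the elimination of the prime $p$ from the simple dimensions in part (1): the naive bound $\FPdim(X) \mid pq^n$ would still permit $\FPdim(X) = p$, and it is precisely the stronger divisibility of $\FPdim(\C_{ad})$ — whose $p$-adic valuation is at most $1$ — that rules this out, since any factor of $p$ in $\FPdim(X)$ would contribute $p^2$ to $\FPdim(X)^2$. Everything after that is bookkeeping with prime valuations together with a single reduction modulo $q^2$.
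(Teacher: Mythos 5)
Your overall strategy matches the paper's: a divisibility constraint on $\FPdim(X)^2$ for part (1), followed by the counting identity $\sum_i a_i q^{2i} = pq^n$ reduced modulo $q^2$ for part (2). Part (2) is correct and is exactly the paper's argument.

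The problem is the input to part (1). You invoke \cite[Corollary 5.3]{gn} to get $\FPdim(X)^2 \mid \FPdim(\C_{ad})$, but that corollary requires $\C$ to be \emph{nilpotent}, which is why the paper only ever cites it for categories already known to be nilpotent (in Lemma \ref{nilp-ext} and at the end of the proof of Theorem \ref{tannakian-subcat}). Nilpotency is not a hypothesis here; by Theorem \ref{gt-conditions} it is essentially equivalent to the group-theoreticity the paper is trying to establish, so assuming it would be circular. Worse, the divisibility you assert is simply false for general integral modular categories: $\Rep(D(S_3))$ has $\FPdim(\C_{ad}) = 18$ and simple objects of dimension $2$, and $4 \nmid 18$. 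Fortunately your argument only needs an upper bound of $1$ on the $p$-adic valuation of whatever $\FPdim(X)^2$ divides, and the correct tool --- \cite[Lemma 1.2]{eg}, valid for any integral modular category and the one the paper actually uses --- gives $\FPdim(X)^2 \mid \FPdim(\C) = pq^n$, whose $p$-exponent is already $1$. Substituting this citation repairs part (1) verbatim ($2a \le 1$ forces $a = 0$, and $2b \le n$ forces $b \le m$). Your closing remark that the ``naive bound'' $\FPdim(X) \mid pq^n$ would still permit $\FPdim(X) = p$ is what led you astray: for integral modular categories the available bound is on $\FPdim(X)^2$, not on $\FPdim(X)$, and it already eliminates the prime $p$ without any appeal to $\C_{ad}$.
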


\begin{proof}
Let $X \in \Irr(\C)$. Since $\C$ is both integral and modular we know by \cite[Lemma 1.2]{eg} that $\FPdim(X)^2$ divides $\FPdim(\C) = pq^n$, hence part (1) follows.

Now define $a_i$ for $i = 0, 1, \dotsc, m$ as the number of simple objects of $\C$ with $\FPdim = q^i$, namely:
\begin{equation*}
a_i = |\{ X \in \Irr(\C) \, | \, \FPdim(X) = q^i \} |
\end{equation*}
for which we have the following equation from the definition of $\FPdim(\C)$:
\begin{equation*}
\sum^m_{i = 0} a_i q^{2i} = pq^n
\end{equation*}
This equation implies that $q^2$ divides $a_0 = |\Pic(\C)| = |\mathcal{U}(\C)|$ (where the second equality is given by the modularity of $\C$), and hence the universal grading group $\mathcal{U}(\C)$ is non-trivial.
\end{proof}


\subsection{Equivariantization and de-equivariantization.}\label{sec-deeq}


We will consider the equivariantization and de-equivariantization constructions for fusion categories, which can be thought of in the following way:
\begin{equation*}
\left \{\text{\parbox{.3\textwidth}{\center fusion categories w/ $G$-action}}\right\} \raisebox{-2ex}{\shortstack{$\xrightarrow{equivariantization}$\\ $\xleftarrow{de-equivariantization}$}} \left \{\text{\parbox{.3\textwidth}{\center fusion categories w/ $\Rep(G)$-action}}\right\}
\end{equation*}
In particular, a non-degenerate braided fusion category is GT iff it has a de-equivariantization that is GT.

Let $T: G \to \mathcal{A}ut_\otimes(\C)$ be an action of $G$ on the fusion category $\C$ by tensor auto-equivalences and let $\gamma_{g, h}: T_g(T_h(-)) \to T_{gh}(-)$ be the natural isomorphism associated with the action. A {\bf $G$-equivariant object} of $\C$ is a an object $X \in \C$ together with isomorphisms $u_g: T_g(X) \to X$ for each $g \in G$ such that the diagram
\begin{equation*}
\xymatrix{
T_g(T_h(X))\ar[d]_{\gamma_{g,h}(X)}\ar[r]^{T_g(u_h)}
&T_g(X)\ar[d]^{u_g}\\
T_{gh}(X)\ar[r]^{u_{gh}}&X\\
}
\end{equation*}
commutes for all $g, h \in G$. Equivariant objects $(X, u)$ of $\C$ form a fusion category $\C^G$ called the {\bf $G$-equivariantization} of $\C$ whose morphisms are those morphisms in $\C$ commuting with the $u_g$ for all $g \in G$.


In the other direction, let $\C$ be a fusion category with an action of $\Rep(G)$. Now consider the regular algebra $C(G) :=\rm{Fun}(G, \mathbb{C})$ for the group; since $G$ acts on $C(G)$ by left translation $C(G)$ is a commutative algebra in $\Rep(G)$. Suppose also that $\Rep(G) \subseteq \Z(\C)$ such that $\Rep(G)$ embeds into $\C$ via the forgetful functor. Then we may consider the category $\C_G$ of $C(G)$-modules in $\C$, the {\bf de-equivariantization} of $\C$ by $\Rep(G)$.

These constructions are inverses:
\begin{equation*}
(\C_G)^G \simeq \C \simeq (\C^G)_G
\end{equation*}
and they have the following Frobenius-Perron dimensions:
\begin{align*}
\FPdim(\C_G) &= \frac{\FPdim(\C)}{|G|}\\
\FPdim(\C^G) &= |G| \FPdim(\C)
\end{align*}


In \cite[Proposition 4.56(ii)]{dgno-bfc} it was shown that if $\C$ is a braided fusion category with a Tannakian subcategory $\Rep(G)$, then $\C$ is non-degenerate iff $\C_G$ is an extension of a non-degenerate braided fusion category.






Finally, the existence of group-theoretical de-equivariantizations is a useful criterion for determining whether $\C$ itself is a group-theoretical fusion category:

\begin{thm}\label{deeq-thm}
\cite[Theorem 7.2]{nnw}Let $\C$ be a braided fusion category. Then:
\begin{center}
$\C$ is GT $\iff$  $\exists$ subcategory $\Rep(G) \subseteq \C$ such that $\C_G$ is pointed
\end{center}
\end{thm}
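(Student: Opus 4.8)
The plan is to prove the two implications separately, with the reverse one routine and the forward one the crux. For $(\Leftarrow)$, suppose $\Rep(G) \subseteq \C$ is Tannakian and the de-equivariantization $\C_G$ is pointed. Since equivariantization and de-equivariantization are mutually inverse, $\C \simeq (\C_G)^G$ realizes $\C$ as the $G$-equivariantization of the pointed -- hence group-theoretical -- category $\C_G$. I would then invoke the known closure of the class of group-theoretical fusion categories under equivariantization, which one sees by comparing Drinfel'd centers, since $\Z((\C_G)^G) \simeq \Z(\C)$ is then the center of a group-theoretical category; this gives that $\C$ is GT. In the non-degenerate case one may instead quote directly the criterion recalled above, that a non-degenerate braided fusion category is GT as soon as it admits a group-theoretical de-equivariantization.

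For $(\Rightarrow)$, assume $\C$ is GT, so in particular $\C$ is integral. I would choose $\Rep(G) \subseteq \C$ to be a \emph{maximal} Tannakian subcategory (possible by finiteness of the lattice of fusion subcategories, and allowing $\vect$ as a degenerate option), use the braiding to embed it into $\Z(\C)$, and form $\C_G$, which is again braided and again group-theoretical because group-theoreticity is preserved under de-equivariantization. By the correspondence between fusion subcategories of $\C_G$ and fusion subcategories $\D$ with $\Rep(G) \subseteq \D \subseteq \C$, any nontrivial Tannakian subcategory of $\C_G$ would lift, via equivariantization (the equivariantization of a Tannakian category under the induced action being again Tannakian), to a Tannakian subcategory of $\C$ strictly containing $\Rep(G)$, contradicting maximality. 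Hence $\C_G$ has no nontrivial Tannakian subcategory, and it remains only to show such a category is pointed.

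Everything therefore reduces to the following Key Lemma, which I expect to be the main obstacle: a braided group-theoretical fusion category $\B$ possessing no nontrivial Tannakian subcategory must be pointed. The approach I would take is to pass to the Drinfel'd center $\Z(\B)$, which -- being the center of a group-theoretical category -- is braided equivalent to $\Z(\vect_\Gamma^\omega)$ for some finite group $\Gamma$ and $3$-cocycle $\omega$, and therefore contains a Lagrangian (that is, maximal Tannakian) subcategory $\Rep(\Gamma)$. Restricting this Lagrangian along the canonical braided embedding $\B \hookrightarrow \Z(\B)$ produces Tannakian data inside $\B$, and the standing hypothesis that $\B$ has no nontrivial Tannakian subcategory should force $\Gamma$ to be abelian and the relevant cohomological obstruction to vanish, so that $\B$ is pointed. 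Pinning down this last step -- tracking exactly how the absence of Tannakian subcategories of $\B$ constrains the group $\Gamma$ governing its center -- is where the genuine difficulty lies, and I anticipate it will require the explicit description of the fusion subcategories of $\Z(\vect_\Gamma^\omega)$ in terms of subgroups of $\Gamma \times \Gamma$ and their cohomology.
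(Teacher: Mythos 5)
The paper does not actually prove this statement: it is imported verbatim as \cite[Theorem 7.2]{nnw} and used as a black box, so there is no internal proof to compare yours against. Judged on its own terms, your proposal is a reasonable outline of the backward implication but falls well short of a proof of the forward one. For ($\Leftarrow$), the route $\C \simeq (\C_G)^G$ plus ``equivariantizations of pointed categories are group-theoretical'' is correct (one clean way: $\C$ is Morita equivalent to the crossed product $\C_G \rtimes G$, which is GT when $\C_G$ is pointed, and GT is Morita-invariant), but your stated justification ``since $\Z((\C_G)^G)\simeq\Z(\C)$'' is a tautology, not an argument; you need either a citation for closure of GT under equivariantization or the Lagrangian-subcategory criterion for the center.

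For ($\Rightarrow$) there are two genuine gaps. First, when $\Rep(G)$ is Tannakian in $\C$ but not contained in the M\"uger center, $\C_G$ is only a braided $G$-crossed category, not braided, so ``Tannakian subcategory of $\C_G$'' is not well posed as written; moreover the subcategory correspondence you invoke is between fusion subcategories of $\C$ containing $\Rep(G)$ and \emph{$G$-stable} fusion subcategories of $\C_G$, and you must check both $G$-stability and that the symmetric structure survives equivariantization before concluding a contradiction with maximality. Second, and decisively, the entire content of the theorem is concentrated in your unproved Key Lemma (a braided GT category with no nontrivial Tannakian subcategory is pointed); the reduction to it is the easy part, and your sketch of its proof --- pass to $\Z(\B)\simeq\Z(\vect_\Gamma^\omega)$ and exploit the classification of fusion subcategories of twisted doubles by pairs of subgroups and bicharacters --- is precisely the machinery with which \cite{nnw} proves the theorem directly, without the detour through a maximal Tannakian subcategory. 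As it stands the proposal is a roadmap that defers the essential difficulty rather than a proof.
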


\noindent Our aim in the remainder of the paper is to show that categories in the family of interest have Tannakian subcategories so that we may apply the preceding theorem.

\subsection{Super-Tannakian categories}

To construct de-equivariantizations of our modular categories we must establish the existence of a Tannakian subcategory.

Recall that it was proven in \cite{de} that any symmetric fusion category is equivalent to a {\bf super-Tannakian} category, which is a category of super-representations of a finite group. In particular, if $\E$ is super-Tannakian then $\SuperV \subseteq \E$. Clearly all Tannakian categories are super-Tannakian; however, a {\it strictly} Tannakian category is required to construct the de-equivariantization. The following results will be utilized to rule out such possibilities.

All super-Tannakian categories can be given in the following way. Let $G$ be a finite group and let $u \in Z(G)$ such that $u^2 = 1$. Define the braided fusion category $\Rep(G, u)$ to be the fusion category $\Rep(G)$ with braiding $c^u_{X, Y}$ for objects $X, Y \in \Rep(G)$ given by:
\begin{equation*}
c^u_{X, Y}(x \otimes y) = (-1)^{mn} y \otimes x
\end{equation*}

\begin{equation*}where
\begin{cases}
    ux = (-1)^m x & (x \in X)\\ uy = (-1)^n y & (y \in Y)
\end{cases}
\end{equation*}

\noindent Using this characterization it was proven that the following dichotomy exists:

\begin{thm}\label{tannakian-sub-sym}\cite[Corollary 2.50]{dgno-bfc}
Let $\C = \Rep(G, u)$ be a symmetric fusion category. Then either:
\begin{enumerate}
\item $\C$ is Tannakian, or
\item $\Rep(G/\langle u \rangle) \subseteq \C$ is a Tannakian subcategory with $\FPdim = \frac{1}{2}\FPdim(\C)$.
\end{enumerate}
In particular, if $\FPdim(\C)$ is odd then $\C$ is Tannakian.
\end{thm}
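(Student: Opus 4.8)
The plan is to reduce the entire statement to a dichotomy on the central element $u$, treating the cases $u = 1$ and $u \neq 1$ separately; the only real content lies in exhibiting the advertised Tannakian subcategory in the second case and computing its Frobenius--Perron dimension. First I would dispose of the case $u = 1$. Here $ux = x$ for every $x$ in every object, so in the defining sign formula we always have $m = n = 0$ and the braiding collapses to $c^u_{X,Y}(x \otimes y) = y \otimes x$, the standard symmetric braiding. Thus $\C = \Rep(G,1)$ is simply $\Rep(G)$ with its canonical symmetric structure, which is Tannakian by definition, giving conclusion (1).

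The main step is the case $u \neq 1$, so that $\langle u \rangle \cong \mathbb{Z}/2$. I would identify $\Rep(G/\langle u\rangle)$ with the full subcategory of $\Rep(G)$ consisting of those representations on which $u$ acts trivially; this is closed under tensor products, duals, and subobjects, hence is a fusion subcategory. For any two objects $X, Y$ lying in it one has $ux = x$ and $uy = y$, so again $m = n = 0$ and the restricted braiding is $c^u_{X,Y}(x\otimes y) = y\otimes x$. Therefore the inherited braided structure on $\Rep(G/\langle u\rangle)$ is the trivial symmetric one, making it a Tannakian subcategory of $\C$. Its dimension is $\FPdim(\Rep(G/\langle u\rangle)) = |G/\langle u\rangle| = |G|/2 = \tfrac{1}{2}\FPdim(\C)$, since $\FPdim(\Rep(H)) = |H|$ for any finite group $H$ and $|\langle u\rangle| = 2$. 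This establishes conclusion (2).

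For the final assertion I would argue by orders: $\FPdim(\C) = \FPdim(\Rep(G)) = |G|$, so if this is odd then $G$ has no element of order $2$ by Lagrange's theorem. As $u^2 = 1$ forces $u$ to have order $1$ or $2$, we conclude $u = 1$, and the first case applies to give a Tannakian $\C$. I do not anticipate a genuine obstacle here. In particular, note that the statement is an inclusive dichotomy --- at least one of (1), (2) holds --- so I never need to decide whether $\C$ is Tannakian as an abstract braided category when $u \neq 1$; it suffices to \emph{exhibit} the index-two Tannakian subcategory. The single point requiring care is the verification that the braiding genuinely restricts to the trivial symmetric braiding on the subcategory, but this is immediate from the sign formula once one observes that $u$ acts as the identity on every object there.
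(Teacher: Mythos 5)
The paper does not prove this statement at all --- it is quoted verbatim as \cite[Corollary 2.50]{dgno-bfc} and used as a black box --- so there is no internal proof to compare against. Your blind proof is correct and self-contained, and it is essentially the standard argument behind the cited result: since $u$ is central with $u^2=1$, the full subcategory of representations on which $u$ acts trivially is a fusion subcategory identified with $\Rep(G/\langle u\rangle)$, the sign $(-1)^{mn}$ degenerates to $+1$ there so the restricted braiding is the ordinary flip, and the dimension count $\FPdim(\Rep(H))=|H|$ gives the factor $\tfrac12$; the odd-order case follows because $u$ must then be trivial. Your observation that only the \emph{inclusive} disjunction needs proving is apt and matches every use of the theorem in the paper (the $q$ odd case uses the final clause, and the $q=2$ case only needs a Tannakian subcategory of dimension at least $\tfrac12\FPdim$). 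The one thing your route deliberately omits, and which the DGNO original does address, is the converse direction of the dichotomy --- that $\Rep(G,u)$ with $u\neq 1$ is genuinely \emph{not} Tannakian, which is seen from the negative categorical dimension $\Tr(u|_X)$ of any simple $X$ on which $u$ acts by $-1$; but as you note, that sharper fact is not required for the statement as written nor for its applications here.
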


Finally, we will make use of a helpful lemma of M\"uger's regarding braided fusion categories containing $\SuperV$ (which thus applies to all strictly super-Tannakian categories):

\begin{lem}\label{supervec-lem}\cite[Lemma 5.4]{mu-galois}
Let $\C$ be a braided fusion category such that $\SuperV \subseteq \mathcal{Z}_2(\C)$ and let $H \in \C$ be the invertible object generating $\SuperV$. Then $H \otimes X \ncong X$ for all $X \in \Irr(\C)$.
\end{lem}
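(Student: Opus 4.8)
The plan is to derive a contradiction from the assumption that $H \otimes X \cong X$ for some $X \in \Irr(\C)$ by examining the ribbon twist. Throughout I would work with the canonical spherical (hence ribbon) structure available in this setting: since $\C$ is pseudo-unitary it carries a distinguished spherical structure with $\dim = \FPdim > 0$, and a braided spherical fusion category is ribbon with a canonical twist $\theta$. The feature of $\SuperV$ that I will exploit is not merely that $H$ is invertible with $H \otimes H \cong \mathbf{1}$, but that its self-braiding is $c_{H,H} = -\id$; this is exactly what distinguishes $\SuperV$ from the Tannakian category $\Rep(\mathbb{Z}/2)$, and it is the source of the sign that kills the isomorphism.

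First I would record the \emph{spin--statistics} identity for the invertible object $H$, namely $\theta_H = -\id_H$. Since $H$ is invertible we have $\dim(H) = 1$, and $c_{H,H}$ is a scalar endomorphism of the simple object $H \otimes H \cong \mathbf{1}$, equal to $-\id$ by hypothesis. Recovering the twist of $H$ as the normalized partial trace of its self-braiding gives
\[
\theta_H = \frac{1}{\dim(H)}\,(\id_H \otimes \Tr_H)(c_{H,H}) = \frac{-\dim(H)}{\dim(H)}\,\id_H = -\id_H .
\]
(Equivalently, one applies the balancing axiom to $H \otimes H \cong \mathbf{1}$ together with sphericity to pin down the sign.) This is the step I expect to be the main obstacle: the conclusion $H \otimes X \ncong X$ cannot follow from the braided structure alone---the hexagon, naturality, and centralizer relations are all simultaneously satisfiable---so the sign $-1$ must be converted into a numerical invariant. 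That is precisely what forces me to invoke a spherical/ribbon structure and to argue that $\theta_H$ is genuinely $-1$ rather than merely $\pm 1$.

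With $\theta_H = -\id_H$ in hand the remainder is formal. Because $\SuperV \subseteq \mathcal{Z}_2(\C)$, the object $H$ is transparent, so the double braiding $c_{X,H} \circ c_{H,X}$ is the identity; feeding this into the balancing axiom on the simple object $H \otimes X$ yields
\[
\theta_{H \otimes X} = (\theta_H \otimes \theta_X) \circ c_{X,H} \circ c_{H,X} = \theta_H\, \theta_X = -\,\theta_X .
\]
On the other hand, if $H \otimes X \cong X$ then naturality of the twist forces $\theta_{H \otimes X} = \theta_X$. Comparing the two expressions gives $\theta_X = -\theta_X$, hence $\theta_X = 0$; but $\theta_X$ is an invertible scalar (a root of unity), a contradiction. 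Since $X \in \Irr(\C)$ was arbitrary, this proves $H \otimes X \ncong X$ for all simple $X$. I would close by noting that the argument never uses non-degeneracy of $\C$: it relies only on the transparency of $H$ and its nontrivial self-braiding, so it applies verbatim inside any symmetric or slightly degenerate subcategory in which $\SuperV$ lies in the M\"uger center.
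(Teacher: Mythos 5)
The paper offers no proof of this lemma --- it is quoted directly from M\"uger \cite[Lemma 5.4]{mu-galois} --- so there is no internal argument to compare against; what you have written is the standard twist proof of that result, and it is correct in substance. The three inputs you isolate are exactly the right ones: $c_{H,H}=-\id$ (the feature distinguishing $\SuperV$ from $\Rep(\mathbb{Z}/2)$), transparency of $H$ (which kills the monodromy in the balancing axiom, giving $\theta_{H\otimes X}=\theta_H\,\theta_X$), and naturality of $\theta$ (which forces $\theta_{H\otimes X}=\theta_X$ whenever $H\otimes X\cong X$); combined with $\theta_H=-1$ these yield $\theta_X=-\theta_X$, a contradiction. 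Your closing observation that non-degeneracy is never used is also worth keeping, since the paper applies the lemma to $\C_{ad}$ and to $(\C_{ad})_{\mathbb{Z}_2}$, neither of which is modular.

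Two caveats. First, your appeal to pseudo-unitarity is a genuine additional hypothesis: it is not known that an arbitrary braided fusion category admits a spherical (hence ribbon) structure, so as written your argument does not prove the lemma in the full generality in which it is stated. This is harmless for the present paper --- every category to which Lemma \ref{supervec-lem} is applied is integral, hence pseudo-unitary by \cite[Proposition 8.24]{eno}, and M\"uger's own proof takes place in the unitary setting --- but you should state explicitly that you are proving the pseudo-unitary case. Second, the normalization in your partial-trace formula is off: the standard identity is $\theta_V=\mathrm{ptr}(c_{V,V})$ with no division by $\dim(V)$ (the balancing axiom applied to $H\otimes H\cong\mathbf{1}$ only gives $\theta_H=\pm 1$, and it is $\dim(H)=+1$ in the canonical spherical structure that selects the sign $-1$). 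Since $\dim(H)=1$ the spurious factor does not affect your conclusion, but the formula as displayed would be false for non-invertible objects.
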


\subsection{Tannakian subcategories}

We will now show that an integral modular category $\C$ with $\FPdim(\C) = pq^n$ is either GT or contains a Tannakian subcategory.

\begin{thm}\label{tannakian-subcat}
Let $\C$ be an integral modular category with $\FPdim(\C) = pq^n$. Then either:
\begin{enumerate}
\item $\C$ is GT, or
\item $\C$ has a Tannakian subcategory $\E$ with $\FPdim(\E) = q^i$ for some $i \geq 2$. In particular, if $q \neq 2$ then $(\C_{ad})_{pt}$ is Tannakian.
\end{enumerate}
\end{thm}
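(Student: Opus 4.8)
The goal is a dichotomy: either $\C$ is GT, or $\C$ contains a Tannakian subcategory of dimension $q^i$ with $i \geq 2$. The natural starting point is the universal grading, which by Lemma \ref{dim-lem}(2) is nontrivial since $q^2 \mid |\mathcal{U}(\C)|$. The plan is to locate a symmetric subcategory and then use the super-Tannakian machinery of Theorem \ref{tannakian-sub-sym} together with M\"uger's Lemma \ref{supervec-lem} to either extract a strictly Tannakian piece or force the structure that makes $\C$ GT. I would first examine the M\"uger center of the adjoint category or, more directly, consider the symmetric subcategories sitting inside $\C$. Since $\C$ is modular its own M\"uger center is trivial, but proper symmetric subcategories can exist; by the remark opening Section \ref{sec3}, a fusion subcategory $\D$ is symmetric iff $\D \subseteq \D'$.

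\textbf{Key steps.}
First I would produce a symmetric subcategory $\E$ of $q$-power dimension. The pointed part $\C_{pt}$ has order $|\mathcal{U}(\C)|$ divisible by $q^2$, and within the modular setting $(\C_{pt})' = \C_{ad}$; one expects a nontrivial symmetric subcategory inside $\C_{pt} \cap \C_{ad}$, equivalently inside $(\C_{ad})_{pt}$. Second, I would invoke Theorem \ref{tannakian-sub-sym} on this symmetric subcategory $\E = \Rep(G,u)$: either $\E$ is already Tannakian (giving case (2) once we check $\FPdim(\E) = q^i$ with $i \geq 2$), or it is strictly super-Tannakian, in which case $\SuperV \subseteq \E \subseteq \mathcal{Z}_2(\E)$ and Theorem \ref{tannakian-sub-sym} hands us the index-two Tannakian subcategory $\Rep(G/\langle u\rangle)$. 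The subtle point is that $\SuperV$ has dimension $2$, so the strictly super-Tannakian alternative can only occur when $q = 2$; when $q \neq 2$ the dimension $q^i$ is odd, forcing $\E$ Tannakian outright, which is exactly the promised conclusion that $(\C_{ad})_{pt}$ is Tannakian. Third, for the leftover case where the only symmetric subcategories available have dimension $pq^j$ or fail to yield $i \geq 2$, I would appeal to Lemma \ref{symsubpqi}: any symmetric subcategory of dimension $pq^j$ with $j > 0$ already forces $\C$ to be GT, landing in case (1).

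\textbf{Main obstacle.}
The hard part will be controlling the case $q = 2$, where $\SuperV$ genuinely obstructs Tannakian-ness, and ensuring the extracted Tannakian subcategory has dimension $q^i$ with \emph{$i \geq 2$} rather than merely $i = 1$. Here I expect to lean on Lemma \ref{supervec-lem}: if the generating invertible object $H$ of $\SuperV$ lies in a relevant center, then $H \otimes X \ncong X$ for all simple $X$, which imposes a divisibility/parity constraint on the fusion data that is incompatible with $\FPdim(\C) = pq^n$ unless we can produce a larger symmetric subcategory or pass to a grading component. I would combine this with the dimension count of Lemma \ref{dim-lem} (the equation $\sum a_i q^{2i} = pq^n$ forcing $q^2 \mid a_0$) to upgrade a dimension-$q$ symmetric subcategory to one of dimension at least $q^2$, or else to trigger one of the GT criteria. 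Closing this case cleanly — and verifying the final clause that $(\C_{ad})_{pt}$ itself is the Tannakian witness when $q \neq 2$ — is where the real work lies, and likely where the authors invoke the assistance of Natale acknowledged in the introduction.
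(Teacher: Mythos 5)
Your plan follows the same skeleton as the paper's proof: the witness is $(\C_{ad})_{pt}$, which is symmetric because $(\C_{ad})_{pt} \subseteq \C_{ad} = (\C_{pt})' \subseteq ((\C_{ad})_{pt})'$, and the dichotomy of Theorem \ref{tannakian-sub-sym} plus M\"uger's Lemma \ref{supervec-lem} is indeed how the $q=2$ obstruction is handled. But as written the proposal has two genuine gaps. First, you never carry out the case analysis on $\FPdim(\C_{pt})$ that makes the whole argument run. The paper enumerates the possible values $pq^{n-1}$, $pq^i$ ($2\le i\le n-2$), $q^n$, $q^{n-1}$, and $q^i$ ($2\le i\le n-2$): the first is impossible (it forces $\C$ pointed), the second gives $\FPdim(\C_{ad})=q^{n-i}$, a prime power, so $\C$ is nilpotent and hence GT by Corollary \ref{nilp->gt} --- note this is \emph{not} via Lemma \ref{symsubpqi}, which would require exhibiting a symmetric subcategory of dimension $pq^j$ that you have not produced; the case $q^n$ is impossible because $(\C_{pt})'$ would be pointed of dimension $p$ yet not contained in $\C_{pt}$; and the case $q^{n-1}$ is eliminated by a counting argument ($\sum_i a_i^g q^{2i}=pq$ forces $q\mid a_0^g$ for every graded component, giving at least $q^n$ invertibles, contradiction). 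Only after this is one entitled to assume $\FPdim(\C_{pt})=q^i$ with $2\le i\le n-2$, which is what guarantees the Tannakian subcategory has dimension $q^j$ with $j\ge 2$. Your worry about ``upgrading a dimension-$q$ symmetric subcategory'' is resolved by this enumeration, not by parity constraints from Lemma \ref{supervec-lem}.

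Second, the $q=2$ case is harder than your sketch allows. When $\FPdim((\C_{ad})_{pt})\ge 2^3$, Theorem \ref{tannakian-sub-sym} immediately gives $\Rep(G/\langle u\rangle)$ of dimension $\ge 2^2$ and you are done. The real difficulty is $\FPdim((\C_{ad})_{pt})=2^2$, which splits further. If $\FPdim(\C_{ad})=2^{n-i}p$ with $n-i>2$, the dimension equation forces $a_1^e$ odd, so the action of $\Pic(\C_{ad})$ (of order $4$) on the $2$-dimensional simples has a fixed point $Y$, and $H\otimes Y\cong Y$ contradicts Lemma \ref{supervec-lem} --- this is the argument you gesture at. But if $n-i=2$, so $\FPdim(\C_{ad})=4p$, that parity argument does not close the case; the paper instead de-equivariantizes by a copy of $\Rep(\mathbb{Z}_2)\subseteq (\C_{ad})_{pt}=\mathcal{Z}_2(\C_{ad})$, uses Lemma \ref{supervec-lem} inside $(\C_{ad})_{\mathbb{Z}_2}$ to kill all $2$-dimensional simples there, deduces that $\C_{\mathbb{Z}_2}$ is nilpotent and then pointed, and concludes $\C$ is GT by Theorem \ref{deeq-thm} --- i.e.\ this subcase lands in alternative (1), not (2). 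Without this final de-equivariantization step (the part due to Natale) the proof is incomplete.
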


\begin{proof}
By Lemma \ref{dim-lem} we know that the universal grading is non-trivial, so let:
\begin{equation*}
\C = \bigoplus_{g \in \mathcal{U}(\C)} \C_g
\end{equation*}
denote the universal grading of $\C$. Recall that $\C_e = \C_{ad}$ and, since $\C$ is modular we have $(\C_{pt})' = \C_{ad}$. Also by Lemma \ref{dim-lem} we know that the prime factorization for $|\mathcal{U}(\C)|=\FPdim(\C_{pt})$ cannot be square-free in the prime $q$. We now proceed by considering the possible values for $\FPdim(\C_{pt})$:

\begin{itemize}
\item Suppose $\FPdim(\C_{pt}) = pq^{n-1}$. Then $\FPdim(\C_e) = \FPdim(\C_{ad}) = \FPdim((\C_{pt})') = q$ by \eqref{centfactor}, hence $\FPdim(\C_g) = q$ for all $g \in \mathcal{U}(\C)$, hence each component of the grading is pointed, hence $\C$ is pointed. This is a contradiction since, by assumption, $\C_{pt} \neq \C$.
\item Suppose $\FPdim(\C_{pt}) = pq^i$ for some $i \in \{2, \dotsc, n-2\}$. Therefore $\FPdim(\C_{ad})=q^{n-i}$, again by \eqref{centfactor}. So $\C_{ad}$ has prime-power dimension and is therefore nilpotent by \cite[Theorem 8.28]{eno}, hence $\C$ is nilpotent, hence $\C$ is GT by Corollary \ref{nilp->gt}.
\item Suppose $\FPdim(\C_{pt}) = q^n$. Then $\FPdim((\C_{pt})') = p$, again by \eqref{centfactor}. So by \cite[Corollary 8.29]{eno}, $(\C_{pt})'$ is pointed, hence it must be contained in $\C_{pt}$. However, as a fusion subcategory we must have $\FPdim((\C_{pt})') \, | \, \FPdim(\C_{pt})$, which cannot be true since $p$ does not divide $q^n$.
\item Suppose $\FPdim(\C_{pt}) = q^{n-1}$. Again by \eqref{centfactor} we have that $pq = \FPdim((\C_{pt})') = \FPdim(\C_{ad}) = \FPdim(\C_e)$, hence every component $\C_g$ of the universal grading has $\FPdim(\C_g) = pq$. Now define $a_i^g$ as the number of non-isomorphis simple objects $X \in \C_g$ of $\FPdim(X) = q^i$:
\begin{equation*}
a_i^g := | \{ X \in \Irr(\C_g) \, | \, \FPdim(X)=q^i \} |
\end{equation*}
and consider the equation following from the definition of $\FPdim(\C_g)$:
\begin{equation*}
\sum^m_{i=0} a_i^g q^{2i} = pq
\end{equation*}
From this we immediately see that $\FPdim((\C_g)_{pt}) = a_0^g \neq 0$ and $q \, | \, a_0^g$ for every $g \in \mathcal{U}(\C)$. Hence each $\C_g$ must have at least $q$ non-isomorphic invertible simple objects. Since $|\mathcal{U}(\C)| = q^{n-1}$ (by modularity) there must be at least $q^n$ non-isomorphic simple objects in $\C$. This clearly contradicts the assumption that $\FPdim(\C_{pt}) = q^{n-1}$.
\end{itemize}

For the remainder of the proof we will consider the final case where $\FPdim(\C_{pt}) = q^i$ for some $i \in \{2, \dotsc, n-2\}$. In this case we have, again by \eqref{centfactor}, that $\FPdim((\C_{pt})') = pq^{n-i}$, i.e. $\FPdim(\C_{ad}) = pq^{n-i}$. Since $\C_{ad})_{pt}$ is a fusion subcategory of $\C_{pt}$, we may see that  $\FPdim((\C_{ad})_{pt}) = q^j$, for some $j \in \{2, \dotsc, n-i-2\}$.

Applying the centralizer to $(\C_{ad})_{pt} \subseteq \C_{pt}$ we see that $(\C_{pt})' \subseteq ((\C_{ad})_{pt})'$. Now since $\C$ is modular we have:
\begin{equation*}
(\C_{ad})_{pt} \subseteq \C_{ad} = (\C_{pt})' \subseteq ((\C_{ad})_{pt})'
\end{equation*}
which shows that $(\C_{ad})_{pt}$ is contained in its own centralizer, i.e. $(\C_{ad})_{pt}$ is a symmetric subcategory.

We will prove that either $(\C_{ad})_{pt}$ is a Tannakian subcategory or that it contains a Tannakian subcategory $\E$ with $\FPdim(\E)=q^i$ for some $i \geq 2$. If $q$ is odd then $\FPdim((\C_{ad})_{pt})=q^j$ is odd, hence  since $(\C_{ad})_{pt}$ is symmetric it must be Tannakian by Theorem \ref{tannakian-sub-sym}.

So suppose now that $q = 2$ and, since $(\C_{ad})_{pt}$ is symmetric, let $G$ be a finite group of order $2^j = \FPdim((\C_{ad})_{pt})$ and $u \in Z(G)$ an order 2 element such that $(\C_{ad})_{pt} \simeq \Rep(G, u)$ as symmetric fusion categories. If $\FPdim((\C_{ad})_{pt}) \geq 2^3$ then $\Rep(G/\langle u \rangle)$ is a Tannakian subcategory where $\FPdim(\Rep(G/\langle u \rangle) \geq 2^2$ by Theorem \ref{tannakian-sub-sym}.

If $\FPdim((\C_{ad})_{pt}) = 2^2$ let $a^e_k$ be given as before:
\begin{equation*}
a^e_k = | \{ X \in \C_e = \C_{ad} \, | \, \FPdim(X) = 2^k \} |
\end{equation*}
Then consider the equation from the definition of $\FPdim(\C_{ad})$:
\begin{equation*}
\sum^{n-i}_{k=0} a_k^e 2^{2k} = \FPdim(\C_{ad}) = 2^{n-i}p
\end{equation*}
We have assumed that $a_0^e = |\Pic(\C_{ad})| = \FPdim((\C_{ad})_{pt}) = 4$, so this equation yields:
\begin{align*}
a_1^e &= 2^{n-i-2}p - 1 - 2\left(\sum^{n-i}_{k=2} a_k^e 2^{2k-3}\right)
\end{align*}

For the case that $i < n-2$ this shows that
\[
a_1^e = |\{ X \in \Irr(\C_{ad}) \, | \, \FPdim(X) = 2 \}|
\]
is odd. Now consider the action of the group $\Pic(\C_{ad})$ by left tensor multiplication (which clearly preserves $\FPdim$) on this odd-ordered set. Therefore the set is equal to a disjoint union of orbits under the action. Since $|\Pic(\C)| = 4$ every orbit can only be length 1, 2 or 4, hence there is an orbit $\Pic(\C) \cdot Y$ of length 1 since the order of the set is odd. Hence $X \otimes Y \cong Y$ for all $X \in \Pic(\C_{ad})$, i.e. for all $X \in (\C_{ad})_{pt}$.

Now suppose that symmetric $(\C_{ad})_{pt}$ is not Tannakian, hence $\SuperV \subseteq (\C_{ad})_{pt}$, which in turn tells us:
\begin{equation*}
\SuperV \subseteq (\C_{ad})_{pt} \subseteq \C_{pt} = (\C_{ad})'
\end{equation*}
But we already proved that there is some $Y \in \Irr(\C_{ad})$ such that $X \otimes Y \cong Y$ for every object $X \in (\C_{ad})_{pt}$, which implies that $H \otimes Y \cong Y$ for $H$ the generator of $\SuperV$. This combined with the inclusion $\SuperV \subseteq (\C_{ad})'$ contradicts M\"uger's Lemma \ref{supervec-lem}, hence $(\C_{ad})_{pt}$ is Tannakian.

For the case that $i = n-2$, suppose again that $(\C_{ad})_{pt}$ is not Tannakian. The following argument is due to Sonia Natale. In this case $\FPdim((\C_{ad})_{pt}) = 2^2$ and $\FPdim(\C_{ad}) = 2^2p$. Since $(\C_{ad})_{pt}$ is symmetric and pointed it must therefore contain a Tannakian subcategory equivalent to $\Rep(\mathbb{Z}_2)$. Furthermore, note that $(\C_{ad})_{pt} = \C_{ad} \cap \C_{pt} = \C_{ad} \cap (\C_{ad})' = \mathcal{Z}_2(\C_{ad})$, hence we may form the de-equivariantization $(\C_{ad})_{\mathbb{Z}_2}$ (with $\FPdim$ of $2p$). This de-equivariantization is braided by \cite[Lemma 3.10]{mu-galois} since $\Rep(\mathbb{Z}_2) \subseteq \mathcal{Z}_2(\C_{ad})$, and the canonical functor $\Phi: \C_{ad} \to (\C_{ad})_{\mathbb{Z}_2}$ is a dominant braided tensor functor such that $\Phi((\C_{ad})_{pt}) \simeq ((\C_{ad})_{pt})_{\mathbb{Z}_2}$ by \cite[Proposition 4.22]{dgno-bfc}.

We may also form the de-equivariantization $((\C_{ad})_{pt})_{\mathbb{Z}_2}$ (with $\FPdim 2$). Since we have assumed that $(\C_{ad})_{pt}$ is not Tannakian we must have that $((\C_{ad})_{pt})_{\mathbb{Z}_2} \simeq \SuperV$ as braided fusion categories (cf. \cite[Remark 9.1]{natrod}). Now since $(\C_{ad})_{pt} \subseteq \mathcal{Z}_2(\C_{ad})$ we may use $\Phi$ to see that
\[
\SuperV \simeq ((\C_{ad})_{pt})_{\mathbb{Z}_2} \subseteq \mathcal{Z}_2((\C_{ad})_{\mathbb{Z}_2})
\]

By \cite[Theorem 5.1]{ego} the category $(\C_{ad})_{\mathbb{Z}_2}$ may only have simple objects with $\FPdim = 2$ or $1$. The above containment allows us to again employ M\"uger's Lemma \ref{supervec-lem} to see that $(\C_{ad})_{\mathbb{Z}_2}$ has no simple objects of $\FPdim 2$, hence $(\C_{ad})_{\mathbb{Z}_2}$ is pointed, and hence nilpotent.

Now consider the canonical functor $\tilde{\Phi}: \C \to \C_{\mathbb{Z}_2}$ and note that
\[
(\C_{\mathbb{Z}_2})_{ad} \subseteq \tilde{\Phi}(\C_{ad}) = (\C_{ad})_{\mathbb{Z}_2}
\]
Hence $\C_{\mathbb{Z}_2}$ is nilpotent. Then by \cite[Corollary 5.3]{gn} each $X \in \Irr(\C_{\mathbb{Z}_2})$ has $\FPdim(X)^2$ dividing $\FPdim((\C_{\mathbb{Z}_2})_{ad}) = 2p$, hence all of the simple objects of $\C_{\mathbb{Z}_2}$ are invertible, hence it is pointed, hence $\C$ is GT by Theorem \ref{deeq-thm}.
\end{proof}

\subsection{Conditions for $\C$ to be group theoretical when $p < q$}

Restricting to the case that $p < q$ allows us to find equivalent conditions for $\C$ with $\FPdim(\C) = pq^n$ to be GT. We start by showing that if such a category is GT then it must be nilpotent:

\begin{prop}\label{pqn-gt-nilp}
Let $\C$ be a GT fusion category with $\FPdim(\C) = pq^n$ where $p < q$. Then $\C$ is nilpotent if $|\Pic(\C)|$ is a power of $q$.
\end{prop}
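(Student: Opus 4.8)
The plan is to induct on the exponent $n$ (equivalently, on $\FPdim(\C)$), with the whole argument resting on a single nontrivial point: that the universal grading group $\mathcal{U}(\C)$ does not vanish. Granting that, the inductive step is short, so the real content is the non-perfectness of $\C$.

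First I would record the reduction. Suppose it is already known that $\mathcal{U}(\C) \neq 1$. Then $\C_{ad} = \C_e$ is a \emph{proper} fusion subcategory, and a fusion subcategory of a GT category is GT by \cite[Proposition 8.44]{eno}, so $\C_{ad}$ is again GT. By the grading dimension formula $\FPdim(\C_{ad}) = pq^n / |\mathcal{U}(\C)|$, a proper divisor of $pq^n$. If this divisor is a power of $q$ (or $1$), then $\C_{ad}$ has prime-power dimension and is nilpotent by \cite[Theorem 8.28]{eno}. Otherwise $\FPdim(\C_{ad}) = pq^{n'}$ with $n' < n$; since $\Pic(\C_{ad}) \leq \Pic(\C)$ is still a power of $q$, the inductive hypothesis gives that $\C_{ad}$ is nilpotent. (The case $\FPdim(\C_{ad}) = p$ is excluded: a category of prime dimension is pointed and would contribute a subgroup of order $p$ to $\Pic(\C)$, against the hypothesis.) In all cases $\C_{ad}$ is nilpotent and properly contained in $\C$; as $\C^{(1)} = \C_{ad}$, the category $\C$ is nilpotent as soon as $\C_{ad}$ is.

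The crux is therefore to rule out the perfect case $\mathcal{U}(\C) = 1$, i.e.\ $\C = \C_{ad}$. Here I would first extract numerical structure: by \cite[Corollary 5.3]{gn}, $\FPdim(X)^2 \mid \FPdim(\C_{ad}) = pq^n$ for every $X \in \Irr(\C)$, and since $p^2 \nmid pq^n$ this forces $p \nmid \FPdim(X)$, so $\FPdim(X) = q^{i_X}$ for every simple $X$. Thus the factor $p$ of $\FPdim(\C)$ is carried entirely by the multiplicities of the simple objects and never by a single dimension. Grouping the simples into orbits under the tensor action of $\Pic(\C)$ (whose orbit sizes are powers of $q$, as $|\Pic(\C)|$ is) and reading off the defining sum of $\FPdim(\C)$, a short $q$-adic valuation argument already eliminates $|\Pic(\C)| \in \{1, q\}$, leaving $|\Pic(\C)| = q^j$ with $j \geq 2$.

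The main obstacle is closing this perfect case, and it cannot be done by counting alone: the numerical identity $pq^n = q^j + \sum q^{e}$ with each $e \geq 2$ is genuinely satisfiable, so structural input is needed to contradict $\C = \C_{ad}$. My plan is to invoke the group-theoretical presentation $\C \simeq \C(G,\omega,H,\psi)$ with $|G| = pq^n$. Because $p < q$, Sylow's theorem yields a \emph{normal} Sylow $q$-subgroup $Q \trianglelefteq G$ with $G/Q \cong \mathbb{Z}_p$. A surjection $\pi\colon G \to \mathbb{Z}_p$ with $H \subseteq \ker\pi = Q$ then induces a faithful $\mathbb{Z}_p$-grading on $\C(G,\omega,H,\psi)$ (the degree of a bimodule supported on a double coset $HgH$ being $\pi(g)$, which is well defined precisely because $H \leq Q$), so that $\mathcal{U}(\C)$ surjects onto $\mathbb{Z}_p$, contradicting perfectness. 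The hardest step, as I see it, is to show that the hypothesis ``$\Pic(\C)$ is a power of $q$'' forces $H$ to be a $q$-group (equivalently $H \leq Q$): morally, any prime $p$ dividing $|H|$ should surface as $p$-torsion in the group $\Pic(\C)$ of invertible $H$-bimodules, and making this precise requires the explicit description of $\Pic(\C(G,\omega,H,\psi))$ via the normalizer data of $H$ in $G$. This is exactly the mechanism by which the hypothesis excludes the $\Rep(S_3)$-type perfect categories, whose invertibles form a power of the \emph{smaller} prime $p$ rather than of $q$.
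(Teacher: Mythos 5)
Your overall mechanism is the right one, and it is essentially the paper's: present $\C \simeq \C(G,\omega,H,\psi)$ with $|G| = pq^n$, use $p<q$ to get the normal Sylow $q$-subgroup $Q \trianglelefteq G$, and use the hypothesis on $\Pic(\C)$ to force $H$ to be a $q$-group. But the step you yourself flag as the hardest --- that a prime $p$ dividing $|H|$ would surface as $p$-torsion in $\Pic(\C)$ --- is exactly the step you do not supply, and it is the entire content of the proposition. The paper closes it by citing \cite[Theorem 5.2]{gnai}, which embeds $\hat H = \Hom(H,\mathbb{C}^\times)$ into $\Pic(\C)$; one then needs the small observation that a group $H$ of order $pq^b$ with $p<q$ has a normal Sylow $q$-subgroup of index $p$, hence surjects onto $\mathbb{Z}_p$, so $p$ divides $|\hat H|$. (As stated, your heuristic is false for general $H$ --- a perfect group has trivial $\hat H$ --- and it is precisely the constraint $|H| \mid pq^n$ with $p<q$ that rescues it.) Without a citable form of this embedding the argument does not get off the ground, so as written there is a genuine gap, albeit one you located precisely.

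The remainder of your route is workable but longer than the paper's, and one auxiliary step is circular. Once $H \leq Q$ is known, the paper finishes in one line: the normal closure of $H$ lies in $Q$, hence is a nilpotent group, hence $\C$ is nilpotent by \cite[Corollary 4.3]{gnai} --- no induction, no case analysis on $\FPdim(\C_{ad})$, and no need to re-present each term of the upper central series group-theoretically, as your recursion would require. Your alternative of extracting a faithful $\mathbb{Z}_p$-grading from $H \leq \ker\pi$ via double-coset supports is correct and does contradict perfectness, but it only replaces the final citation, not the hard step. Separately, your ``$q$-adic valuation'' digression applies \cite[Corollary 5.3]{gn} (that $\FPdim(X)^2$ divides $\FPdim(\C_{ad})$) to the putatively perfect $\C$; that divisibility is a statement about \emph{nilpotent} categories --- indeed it fails for the perfect GT category $\Rep(S_3)$, whose two-dimensional simple has $4 \nmid 6$ --- so invoking it there presupposes the conclusion. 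The circularity is harmless only because nothing downstream depends on that paragraph; it should be cut.
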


\begin{proof}
Since $\C$ is GT we know that it is equivalent to $\C(G, H, \omega, \psi)$ for some finite group $G$ of order $pq^n$, subgroup $H$, $\omega \in Z^3(G, \mathbb{C}^{\times})$, and $\psi \in C^2(H, \mathbb{C}^{\times})$ such that $\text{d}\psi = \omega|_H$ (recall Section \ref{sec-gt}).

By \cite[Theorem 5.2]{gnai} the dual group $\hat H := Hom(H, \mathbb{C}^{\times})$ can be embedded into $\Pic(\C)$, hence $H$ must be a $q$-subgroup since we have assumed that $\Pic(\C)$ has order a power of $q$.

By Sylow's theorems $H$ is contained in a Sylow $q$-subgroup $K \leq G$. So we have that $[G : K] = p$, which is the smallest prime divisor of the order of $G$ (by assumption $p<q$), hence $K$ is a normal subgroup by the Ore theorem for finite groups \cite[Exercise 3(b)]{suzuki}. Therefore the normal closure of $H$ (i.e. the smallest normal subgroup of $G$ containing $H$) must be a $q$-subgroup, hence the normal closure of $H$ must be nilpotent since it is a finite $q$-group. Therefore $\C$ is nilpotent by \cite[Corollary 4.3]{gnai}.
\end{proof}

We now have the following result giving 3 equivalent conditions for $\C$ with $\FPdim(\C) = pq^n$ to be GT when $p < q$.

\begin{thm}\label{gt-conditions}
Let $\C$ be an integral modular category with $\FPdim(\C) = pq^n$ where $p < q$. Then the following are equivalent:
\begin{enumerate}
\item $\C$ is GT
\item $\C$ is nilpotent
\item $p \, | \, \FPdim(\C_{pt})$
\item There exists a symmetric subcategory $\D \subseteq \C$ such that $\D'$ is nilpotent
\end{enumerate}
\end{thm}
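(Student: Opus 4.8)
The plan is to prove the theorem as a cycle of implications $(1) \Rightarrow (3) \Rightarrow (2) \Rightarrow (4) \Rightarrow (1)$, drawing on the machinery already established. First I would observe that $(4) \Rightarrow (1)$ is immediate from Theorem \ref{snc}, and $(2) \Rightarrow (4)$ follows by taking $\D = \vect$ exactly as in Corollary \ref{nilp->gt} (then $\D' = \C$ is nilpotent by hypothesis). Likewise $(2) \Rightarrow (1)$ is just Corollary \ref{nilp->gt}. So the real content is in the forward directions $(1) \Rightarrow (3)$ and $(3) \Rightarrow (2)$, and I would arrange the argument to route through these.

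For $(3) \Rightarrow (2)$, suppose $p \mid \FPdim(\C_{pt})$. By Lemma \ref{dim-lem}(1) every simple object has $\FPdim$ a power of $q$, so $\FPdim(\C_{pt}) = |\Pic(\C)|$ divides $pq^n$ and is in fact of the form $p^{\epsilon} q^{k}$; since $\Pic(\C)$ consists of invertibles and $p \mid |\Pic(\C)|$, we get $\epsilon = 1$. Now $\C_{pt}$ is a pointed (hence nilpotent) fusion subcategory with $\FPdim = pq^k$ for some $k$. The idea is to apply Lemma \ref{symsubpqi}: I would like to produce a \emph{symmetric} subcategory of dimension $pq^i$, $i > 0$. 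Here I expect to use the modular structure — $\C_{pt}$ itself need not be symmetric, but its centralizer relations ($\C_{ad} = (\C_{pt})'$) together with the dimension $pq^k$ let me locate such a subcategory, after which Lemma \ref{symsubpqi} gives GT and then I bootstrap to nilpotence via Proposition \ref{pqn-gt-nilp}. Alternatively, and more directly, I would feed the GT conclusion together with $p \mid |\Pic(\C)|$ into the structure theory: knowing $p$ divides the order of the group of invertibles forces the relevant subgroup $H$ in the $\C(G,\omega,H,\psi)$ presentation to be non-$q$, and I would argue nilpotence from there.

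For $(1) \Rightarrow (3)$ I would argue by contraposition combined with the dichotomy of Theorem \ref{tannakian-subcat}. Suppose $p \nmid \FPdim(\C_{pt})$; since $\FPdim(\C_{pt}) = |\mathcal{U}(\C)|$ divides $pq^n$, this forces $\FPdim(\C_{pt}) = q^i$ to be a pure $q$-power. This is precisely the situation Proposition \ref{pqn-gt-nilp} governs: if $\C$ were GT then $|\Pic(\C)|$ being a power of $q$ would force $\C$ nilpotent — but I must rule out that this nilpotence is compatible with $p \nmid \FPdim(\C_{pt})$, i.e. derive a contradiction or collapse it into $(2)$. The cleanest route is to show directly: if $\C$ is GT with $|\Pic(\C)|$ a $q$-power, then by Proposition \ref{pqn-gt-nilp} it is nilpotent, and a nilpotent modular category has $\FPdim(\C_{pt}) = |\mathcal{U}(\C)| = \FPdim(\C)$ when $\C_{ad}$ is eventually trivial — more carefully, nilpotence of the integral modular $\C$ of dimension $pq^n$ forces it to be pointed (by iterating the grading and applying Lemma \ref{nilp-ext} once a square-free trivial component appears), so $\FPdim(\C_{pt}) = pq^n$ and hence $p \mid \FPdim(\C_{pt})$, contradicting our assumption.

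The main obstacle I anticipate is the interface in $(3) \Rightarrow (2)$: translating the arithmetic condition $p \mid \FPdim(\C_{pt})$ into either a symmetric subcategory of the right dimension (so that Lemma \ref{symsubpqi} applies) or directly into the group-theoretic hypothesis of Proposition \ref{pqn-gt-nilp}. The hypothesis $p < q$ is surely essential precisely here — it is what lets the Ore/Sylow normality argument inside Proposition \ref{pqn-gt-nilp} run — so I would be careful to invoke $p<q$ at exactly the step where I pass from the GT presentation to normality of a Sylow $q$-subgroup. Once nilpotence is in hand, the remaining implications are formal, and the equivalence closes up.
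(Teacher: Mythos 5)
Your cycle structure and the easy implications ($(4)\Rightarrow(1)$ via Theorem \ref{snc}, $(2)\Rightarrow(4)$ via $\D=\vect$) are fine, but both substantive implications have real gaps. For $(3)\Rightarrow(2)$ you propose to manufacture a symmetric subcategory of dimension $pq^i$, apply Lemma \ref{symsubpqi} to get GT, and then ``bootstrap to nilpotence via Proposition \ref{pqn-gt-nilp}.'' That last step cannot work: Proposition \ref{pqn-gt-nilp} concludes nilpotence only under the hypothesis that $|\Pic(\C)|$ is a power of $q$, which is exactly what (3) denies. Moreover the detour through a symmetric subcategory is unnecessary, since you already wrote down the two facts that finish the argument: from $p\mid\FPdim(\C_{pt})$ and divisibility you get $\FPdim(\C_{pt})=pq^k$, and then $\C_{ad}=(\C_{pt})'$ together with \eqref{centfactor} gives $\FPdim(\C_{ad})=q^{n-k}$, a prime power, so $\C_{ad}$ is nilpotent by \cite[Theorem 8.28]{eno} and hence $\C$ is nilpotent. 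This is the paper's route (it is the second bullet in the proof of Theorem \ref{tannakian-subcat}), and note it does not need $p<q$.

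For $(1)\Rightarrow(3)$ your contrapositive setup is correct ($p\nmid\FPdim(\C_{pt})$ forces $\FPdim(\C_{pt})=q^j$, and Proposition \ref{pqn-gt-nilp} --- this is indeed where $p<q$ enters --- gives that $\C$ is nilpotent), but the punchline is wrong: a nilpotent integral modular category of dimension $pq^n$ need \emph{not} be pointed, and Lemma \ref{nilp-ext} will not force pointedness because nothing guarantees that iterating the adjoint construction ever produces a square-free trivial component (the successive components can have dimensions divisible by $q^2$ all the way down while non-invertible simples of dimension a power of $q$ persist). The tool you are missing is the prime decomposition of nilpotent braided fusion categories, \cite[Theorem 1.1]{dgno-gt}: the nilpotent braided category $\C_{ad}$, of dimension $pq^{n-j}$, factors as $\E_p\boxtimes\E_{q^{n-j}}$ with $\FPdim(\E_p)=p$, and $\E_p$ is pointed by \cite[Corollary 8.30]{eno}, so $\C$ contains a pointed subcategory of dimension $p$ and $p\mid\FPdim(\C_{pt})$ --- the desired contradiction. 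Without this (or an equivalent) input, the implication $(1)\Rightarrow(3)$ does not close.
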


\begin{proof}
\underline{(1) $\Leftrightarrow$ (2):}\\
($\Leftarrow$) By Corollary \ref{nilp->gt}.\\
($\Rightarrow$) If $\FPdim(\C_{pt})=pq^j$ for some $j$ then we know that $\FPdim(\C_{ad}) = \FPdim((\C_{pt})') = q^{n-j}$ by \eqref{centfactor} since $\C$ is modular. Therefore $\C_{ad}$ is nilpotent since it is prime-power dimension, by \cite[Theorem 8.28]{eno}, and hence $\C$ is nilpotent. On the other hand, if $\FPdim(\C_{pt}) = q^j$ for some $j$ then $\C$ is nilpotent by Proposition \ref{pqn-gt-nilp}.

\underline{(1) $\Leftrightarrow$ (3):}\\
($\Leftarrow$) By the second bullet-point in the proof of Theorem \ref{tannakian-subcat}.\\
($\Rightarrow$) Suppose that $\C$ is GT and has $\FPdim(\C_{pt}) = q^j$ for some $j$. $\C$ is nilpotent by Proposition \ref{pqn-gt-nilp} hence $\C_{ad}$ is nilpotent by \cite[Proposition 4.6]{gn}.

Again by \eqref{centfactor} we have that $\FPdim(\C_{ad})=pq^{n-i}$. By \cite[Theorem 1.1]{dgno-gt}, a nilpotent braided fusion category has a unique decomposition as a tensor product of braided fusion categories whose FP dimensions are distinct prime powers, i.e.:
\[
\C_{ad} = \E_p \boxtimes \E_{q^{n-i}}
\]
where $\FPdim(\E_t) = t$. $\E_p$ is pointed by \cite[Corollary 8.30]{eno}, hence $\C_{ad}$ contains a pointed subcategory of FP dimension $p$, hence $p$ must divide $\FPdim(\C_{pt})$, which is a contradiction.

\underline{(1) $\Leftrightarrow$ (4):}\\
($\Leftarrow$) By Theorem \ref{snc}\\
($\Rightarrow$) $\C$ is GT, so it is nilpotent by the proof of (1) $\Leftrightarrow$ (2) above. Every fusion subcategory of a nilpotent fusion category is nilpotent by \cite[Proposition 4.6]{gn}, hence $\D'$ is nilpotent.
\end{proof}

\subsection{Cases n = 5, 6, or 7.}

Now we are ready to establish three families of integral modular categories with Frobenius-Perron dimension $pq^n$ as group-theoretical.

\begin{thm}\label{n567-gt}
Let $\C$ be an integral modular category with $\FPdim(\C) = pq^n$. Then:
\begin{enumerate}
\item If $n=5$ then $\C$ is GT.
\item If $p<q$ and $n = 6 \text{ or } 7$ then $\C$ is GT.
\end{enumerate}
\end{thm}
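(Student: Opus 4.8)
The plan is to treat all three cases by a single de-equivariantization that drops the dimension into the already-classified range. First I would apply Theorem \ref{tannakian-subcat}: if $\C$ is GT we are done, so assume it is not. Then $\C$ has a Tannakian subcategory $\E = \Rep(G)$ with $\FPdim(\E) = q^j$, $j \geq 2$, and the case analysis in that proof also forces $\FPdim(\C_{pt}) = q^{i_0}$, a pure power of $q$. Because $\E$ is symmetric, $\FPdim(\E)^2$ divides $\FPdim(\C)$ by \eqref{centfactor}, so $2j \leq n$. Embedding $\Rep(G) \hookrightarrow \Z(\C)$ via the braiding, I would de-equivariantize to obtain $\C_G$, a faithfully $G$-graded fusion category of dimension $pq^{n-j}$ whose trivial component $(\C_G)_e = (\E')_G$ is non-degenerate, hence modular, of dimension $pq^{n-2j}$ by \cite[Proposition 4.56(ii)]{dgno-bfc}. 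The crucial numerics: since $j \geq 2$ we have $n - 2j \leq n - 4 \leq 3$, so $(\C_G)_e$ has dimension one of $p, pq, pq^2, pq^3$, and every integral modular category of such dimension is pointed by the results recalled in the introduction.

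For part (1), with $n = 5$, the constraint $2j \leq 5$ forces $j = 2$, so $(\C_G)_e$ has squarefree dimension $pq$. Since $\C_G$ is a $G$-extension of the pointed (hence nilpotent) category $(\C_G)_e$, its upper central series descends into that of the trivial component and $\C_G$ is nilpotent. Now $(\C_G)_{ad} \subseteq (\C_G)_e$ has squarefree dimension, so \cite[Corollary 5.3]{gn} gives $\FPdim(X)^2 \mid \FPdim((\C_G)_{ad})$ and therefore $\FPdim(X) = 1$ for every $X \in \Irr(\C_G)$; that is, $\C_G$ is pointed, whence $\C$ is GT by Theorem \ref{deeq-thm}. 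The only delicate point is $q = 2$, where a symmetric pointed subcategory of dimension $4$ may fail to be Tannakian; but that subcase is already resolved (shown directly to be GT) inside the proof of Theorem \ref{tannakian-subcat}, so no gap arises.

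For part (2), with $p < q$ and $n \in \{6, 7\}$, the reduced dimension $pq^{n-2j}$ may equal $pq^2$ or $pq^3$, which is not squarefree, so the counting above no longer forces $\C_G$ to be pointed. Here I would instead use Theorem \ref{gt-conditions}, for which it suffices to show $p \mid \FPdim(\C_{pt})$, contradicting $\FPdim(\C_{pt}) = q^{i_0}$. The pointed modular category $(\C_G)_e$ contains a pointed subcategory of dimension $p$, and since $p < q$ we have $q \nmid p - 1 = |\mathrm{Aut}(\mathbb{Z}_p)|$, so the $q$-group $G$ acts trivially on this characteristic $p$-part. Hence the order-$p$ invertibles survive the equivariantization $((\C_G)_e)^G = \E' \subseteq \C$ and furnish invertible objects of order $p$ in $\C$, giving $p \mid \FPdim(\C_{pt})$ as required. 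Note that $p < q$ also forces $q$ to be odd, so that $\E = (\C_{ad})_{pt} = \mathcal{Z}_2(\C_{ad})$ is genuinely Tannakian and the de-equivariantization remains non-degenerate.

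I expect the main obstacle to be exactly this transfer of the prime-to-$q$ structure in part (2): equivariantization by the $q$-group $G$ can in general destroy both nilpotency and pointedness (e.g. producing $\Rep(\mathbb{Z}_p \rtimes \mathbb{Z}_q)$ when the action on the $p$-part is nontrivial), and the hypothesis $p < q$ is precisely what excludes such actions. Making the survival of the order-$p$ invertibles fully rigorous, rather than merely plausible, is the crux; the remaining steps are bookkeeping with \eqref{centfactor} and the dimension bounds.
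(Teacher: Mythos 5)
Your part (1) is essentially the paper's proof: de-equivariantize by the Tannakian $\Rep(G)$ of dimension $q^2$, identify the trivial component of the resulting $G$-extension as a modular category of dimension $pq$, hence pointed, and conclude via nilpotence of the extension, Lemma \ref{nilp-ext}, and Theorem \ref{deeq-thm}. That half is correct.

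Part (2), however, has a genuine gap, and it sits exactly where you suspected. Your argument that $q \nmid p-1$ forces the $q$-group $G$ to act trivially on the Sylow $p$-part of $\Pic((\C_G)_e)$ only shows that the order-$p$ invertible object $Y$ is \emph{fixed up to isomorphism} by the $G$-action. That is necessary but not sufficient for $Y$ to ``survive'' equivariantization as an invertible object: a $G$-fixed invertible lifts to an invertible equivariant object only if an obstruction class in $H^2(G,\mathbb{C}^\times)$ vanishes, and for $G \cong \mathbb{Z}_q\times\mathbb{Z}_q$ this group is $\mathbb{Z}_q \neq 1$. If the obstruction were nonzero, the simple objects of $\E'$ lying over $Y$ would all have Frobenius--Perron dimension $>1$ (they correspond to irreducible $\alpha$-projective representations of the stabilizer), and your contradiction with $\FPdim(\C_{pt})=q^{i_0}$ evaporates. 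The gap is closable --- the obstruction map $\Pic((\C_G)_e)^G \to H^2(G,\mathbb{C}^\times)$ is a group homomorphism, and any homomorphism from $\mathbb{Z}_p$ to a $q$-group is trivial --- but you neither state nor prove this, and it is the load-bearing step. The paper avoids the issue entirely by a different route: it de-equivariantizes $\D'$ (whose M\"uger center is exactly $\D=\Rep(G)$) to get a modular category of dimension $pq^{n-4}\in\{pq^2,pq^3\}$, which is pointed by the known classifications; then $\D'$ is GT by Theorem \ref{deeq-thm}, hence nilpotent by Proposition \ref{pqn-gt-nilp} (since $\Pic(\D')$ is a $q$-group), hence $\C$ is GT by Theorem \ref{snc}. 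This uses only the already-established machinery and never requires transporting invertible objects backwards through an equivariantization.
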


\begin{proof}
By Theorem \ref{tannakian-subcat} either $\C$ is GT or $\C$ has a Tannakian subcategory $\D = \Rep(G)$ with $\FPdim(\D) = q^2$ (i.e. $G$ is an abelian group of order $q^2$). Suppose we are in the latter case.

(1) Suppose that $\FPdim(\C) = pq^5$. Then $\FPdim(\C_G) = pq^3$. Since $\C$ is non-degenerate we have that $\C_G$ is a $G$-extension of a non-degenerate braided fusion category $\E$. Then
\begin{align*}
 \FPdim(\C_G) &= |G|\FPdim(\E) \\
\Rightarrow \quad pq^3 &= q^2\FPdim(\E)
\end{align*}
and hence $\FPdim(\E) = pq$. $\C$ is modular, hence $\E$ is also modular since it is non-degenerate. Since $\E$ is modular and has $\FPdim(\E) = pq$ we know it must be pointed. Therefore $\C_G$ is a $G$-extension of a pointed fusion category, hence $(\C_G)_{ad}$ is pointed, hence $\C_G$ is nilpotent. Then $\C_G$ is pointed by Lemma \ref{nilp-ext}, hence $\C$ is GT by Theorem \ref{deeq-thm}.
\smallskip

(2) Suppose now that $\FPdim(\C) = pq^n$ where $p<q$ and $n$ is $6$ or $7$. We know that if $p$ divides $|\Pic(\C)|$ then $\C$ is GT by Theorem \ref{gt-conditions}, therefore we will assume that $|\Pic(\C)| = q^i$ for some $i$. Recall we have subcategory $\D$ with $\FPdim(\D) = q^2$, hence $\FPdim(\D') = pq^{n-2}$ by \eqref{centfactor}. Furthermore, since $\D$ is symmetric, we have that $\D \subseteq \D''$, hence $\D$ is the M\"uger center of $\D'$.

Consider the de-equivariantization $(\D')_G$ (i.e. the de-equivariantization of $\D'$ by $\D = \Rep(G)$), which has $\FPdim((\D')_G) = pq^{n-4}$. By the preceding we may apply \cite[Remark 2.3]{eno-wgt} to see that $(\D')_G$ is modular, hence it must be pointed since it has $\FPdim = pq^2$ or $pq^3$. Therefore $\D'$ is GT by Theorem \ref{deeq-thm}.

Since $\D' \subseteq \C$, we have that $|\Pic(\D')| = q^j$ for some $j$, hence $\D'$ is nilpotent by Proposition \ref{pqn-gt-nilp}, hence $\C$ is GT by Theorem \ref{snc}.
\end{proof}


\end{document}